\newtheorem{thm}{Theorem}[section]
\newtheorem{cor}[thm]{Corollary}
\newtheorem{lem}[thm]{Lemma}
\newtheorem{prop}[thm]{Proposition}
\theoremstyle{definition}
\newtheorem{exa}[thm]{Example}
\newtheorem{xrem}{Remark}
\numberwithin{equation}{section}
\begin{document}


\baselineskip=17pt



\title[Manjul Gupta, Aneesh Mundayadan]{$q$-Frequent hypercyclicity in spaces of operators}

\author[Manjul Gupta]{Manjul Gupta}
\address{Department of Mathematics and Statistics\\ Indian Institute of Technology Kanpur\\
208 016 Kanpur, India} \email{manjul@iitk.ac.in}

\author[Aneesh M]{Aneesh Mundayadan}
\address{Department of Mathematics and Statistics\\ Indian Institute of Technology Kanpur\\
208 016 Kanpur, India} \email{aneeshm@iitk.ac.in}

\date{19.02.2016}

\begin{abstract}
We provide conditions for a linear map of the form $C_{R,T}(S)=RST$
to be $q$-frequently hypercyclic on algebras of operators on separable
Banach spaces. In particular, if $R$ is a bounded operator
satisfying the $q$-Frequent Hypercyclicity Criterion, then the map
$C_{R}(S)$=$RSR^*$ is shown to be $q$-frequently hypercyclic on the
space $\mathcal{K}(H)$ of all compact operators and the real
topological vector space $\mathcal{S}(H)$ of all self-adjoint
operators on a separable Hilbert space $H$. Further we provide a
condition for $C_{R,T}$ to be $q$-frequently hypercyclic on the
Schatten von Neumann classes $S_p(H)$. We also characterize
frequent hypercyclicity of $C_{M^*_\varphi,M_\psi}$ on the
trace-class of the Hardy space, where the symbol $M_\varphi$ denotes
the multiplication operator associated to $\varphi$.

\end{abstract}

\subjclass[2010]{Primary 47A16; Secondary 46A45}

\keywords{unilateral shift; unconditional
convergence; frequently hypercyclic; multiplication operator; Hardy space}

\maketitle

\section{Introduction}

This paper is a continuation of our earlier work \cite{M-M} on
$q$-frequent hypercyclicity, which coincides with frequent
hypercyclicity for $q=1$. We study here this concept for linear maps
defined on Banach algebras of operators on Banach and Hilbert
spaces. Hypercyclicity in spaces of operators was initiated by K.C. Chan \cite{C} and subsequently studied by J. Bonet, F. Martinez-Gimenez and A. Peris \cite{BFP}, K.C. Chan and R. Taylor \cite{CT}, F. Martinez-Gimenez and A. Peris \cite{FP}
and H. Petersson \cite{H}. Indeed, left multiplication operators $\mathfrak{L}_R(S)=RS$
were considered in \cite{BFP},\cite{C},\cite{CT} and \cite{FP} and their
general form $C_{R,T}(S)=RST$ was studied in \cite{H}. A collective work in \cite{BFP},\cite {C} and \cite{CT} states that a bounded operator
$R$ on a separable Banach space $X$ satisfies the Hypercyclicity
Criterion if and only if the left multiplication operator $\mathfrak{L}_R$ is hypercyclic on
$\mathcal{L}(X)$ in the topology of pointwise convergence.
This result holds for the topology of uniform convergence on compact subsets if $X^*$ is separable and $X$ has the
approximation property, see \cite{BFP}. In \cite{H} H. Petersson proved that if $T$ satisfies the Hypercyclicity Criterion in a separable Hilbert space, then $\mathfrak{L}_T$ as well as the conjugate operator $C_T$ is hypercyclic on the Schatten von Neumann classes $S_p(H)$, $1\leq p<\infty$ and $\mathcal{K}(H)$.

In Section 3 we provide a sufficient criterion for $C_{R,T}$ to be $q$-frequently
hypercyclic on the algebra of compact operators on Banach spaces and
give applications to the unilateral and bilateral shift operators, and
in Section 4 we continue the study in the space $S_p(H)$. Finally in Section 5, using
an Eigenvalue Criterion, we characterize frequent hypercyclicity of certain maps of the form
$C_{R,T}$ defined on spaces of operators on the classical Hardy space and $\ell^p$.

\section{Preliminaries}

A continuous operator $T$ on a topological vector space (TVS) $X$ is said to be
$\mathbf{hypercyclic}$ if the set $\{T^nx: n\geq1\}$ is dense in $X$
for some $x\in X$. For $q\in \mathbb{N}$ (\textit{the set of natural
numbers}), $T$ is said to be $q$-$\mathbf{frequently}$
$\mathbf{hypercyclic}$ (see \cite{M-M}) if there exists a vector
$x\in X$ such that the set $\{n\in \mathbb{N}: T^nx\in U\}$ has
positive $q$-lower density for each non-empty open set $U\subset X$,
where the $q$-lower density of $A\subset \mathbb{N}$ is defined as
\begin{center}
$\displaystyle \underline{\text{$q$-dens}}(A)=\liminf_{N\rightarrow\infty}$ $\frac
{\text{card}\{n\in A:~ n\leq N^q\}}{N}$.
\end{center}
For $q=1$, the above  notion is known as frequent hypercyclicity of
an operator, studied in \cite{BG}, \cite{BGE} and \cite{BE}. If $T$
is frequently hypercyclic, then it is $q$-frequently hypercyclic for
all $q\in \mathbb{N}$, however, the converse is not true, cf.
\cite{M-M}.


Let $X$ and $Y$ be separable Banach spaces. The space of all
bounded (resp. of all compact) operators from $X$ to $Y$ is denoted
by $\mathcal{L}(X,Y)$ (resp. $\mathcal{K}(X,Y)$). We shall use the
symbols $\mathcal{L}(X)$ and $\mathcal{K}(X)$ for $\mathcal{L}(X,X)$
and $\mathcal{K}(X,X)$ respectively. The real subspace of
$\mathcal{L}(H)$, of all self-adjoint operators on a separable
infinite dimensional Hilbert space $H$ is denoted by
$\mathcal{S}(H)$ and is equipped with the topology of uniform
convergence on compact subsets (COT). Also, for $p\in [1,\infty)$
the Schatten von Neumann class $S_p(H)$ is defined as the space
of all operators $T\in \mathcal{L}(H)$ for which the approximation
numbers $(a_n(T))\in \ell^p$, where
\begin{center}
$a_n(T)=\inf \{\|T-F\|:~ rank(F)<n\}, n\geq 1$,
\end{center}
See \cite{DJT} and \cite{BST} for more details on the Schatten
von Neumann classes.

For $R\in \mathcal{L}(X)$, the left and right multiplication
operators are respectively defined as $\mathfrak{L}_R(S)=RS$ and
$\mathfrak{R}_R(S)=SR$. Also, if $R$ is a bounded operator on a
Hilbert space, then the conjugate operator $C_R$ is defined as
$C_R(S)=RSR^*$.

Recall that a Banach space $X$ is said to have the \textbf{approximation
property (AP)} if the identity operator on $X$ can be approximated by
finite rank operators uniformly on compact subsets of $X$; that is,
for any $\epsilon >0$ and $K\subset X$ compact, there exists an
operator $F$ of finite rank such that $\|F(x)-x\|<\epsilon$,
$\forall x\in K$. If $X$ has the AP, then finite rank operators are
norm-dense in $\mathcal{K}(Y,X)$ for all Banach spaces $Y$, cf. \cite{DJT} and \cite{LT}.

A series $\sum_{n\geq 1} x_{n,j}$ in an $F$-space is said to be \textbf{unconditionally convergent uniformly} in $j\geq 0$ if for every $\delta >0$, there exists $N\in \mathbb{N}$ such that $\|\sum_{n\in F} x_{n,j}\|<\delta$ for all finite sets $F\subset [N,\infty)$ and $j$. We will make use of the following inequality in our subsequent work: let $(\lambda_n)$ be a scalar sequence and $\sum_{n\geq 1} x_n$ a series in a Banach space. Then if $F\subset \mathbb{N}$ is finite, we have
\begin{equation}\label{qq}
\displaystyle \left \|\sum_{n\in F} \lambda_n x_n \right \| \leq 4~\sup_{n\in F}|\lambda_n| \sup_{G\subseteq F}\left \|\sum_{n\in G} x_n \right \|,
\end{equation}
cf. \cite{Oho} (See also \cite{KG}).

\section{$q$-Frequent Hypercyclicity in $\mathcal{K}(X)$,
$\mathcal{L}(X)$ and $\mathcal{S}(H)$}

In this section, we first obtain a sufficient criterion for the
$q$-frequent hypercyclicity of $C_{R,T}$ on the Banach algebras
$\mathcal{L}(X,Y)$ and $\mathcal{K}(X,Y)$, where $R\in
\mathcal{L}(Y)$ and $T\in \mathcal{L}(X)$. The next result is already known for $q=1$, cf. \cite{PE}, Remark 9.10. However, following the proof of the frequent hypercyclicity criterion given in Theorem 6.18 of \cite{BM}, we outline the proof for a given $q\in \mathbb{N}$.
\begin{thm} \textbf{$(q$-$FHC$ $Criterion)$} \label{1.3} Let $X$ be a separable $F$-space and $D$ be a dense set in $X$. If for each $x\in D$,
there exists a sequence $(x_n)_{n\geq 0}$ in $X$ such that $x_0=x$ and
\begin{itemize}
\item[(a)] $\sum_{n=0}^{r} T^{r^q-(r-n)^q}(x)$ converges unconditionally, uniformly in $r\geq 0$,
\item[(b)] $\sum_{n\geq0} x_{(n+r)^q-r^q}$ converges unconditionally, uniformly in $r\geq 0$; and
\item[(c)] $T^{n^q}x_{n^q}=x$, $T^{n^q}x_{m^q}=x_{m^q-n^q}$ for $m>n\geq 0$,
\end{itemize}
then $T$ is $q$-frequently hypercyclic on $X$.
\end{thm}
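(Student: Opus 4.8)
The plan is to adapt the construction underlying the classical Frequent Hypercyclicity Criterion (Theorem 6.18 of \cite{BM}) to the $q$-power time scale. Since $X$ is separable, I would first fix a countable dense subset $\{y_p : p\ge 1\}\subseteq D$, and for each $p$ take the sequence $(x_n^{(p)})_{n\ge 0}$ provided by the hypotheses, with $x_0^{(p)}=y_p$. The aim is to produce a single vector $z\in X$ whose orbit visits every prescribed neighbourhood along a set of times of positive $q$-lower density. The guiding observation is that one should place the relevant iterates at \emph{$q$-th power times}: if $A\subseteq\mathbb{N}$ has positive lower density, then $G=\{k^q:k\in A\}$ satisfies
\[
\underline{\text{$q$-dens}}(G)=\liminf_{N\to\infty}\frac{\operatorname{card}\{k\in A:~k\le N\}}{N}=\underline{\text{dens}}(A)>0,
\]
so it suffices to steer $T^{k^q}z$ close to $y_p$ for all $k$ in a density-positive set $A_p$. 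This is exactly the point where the $q$-density in the statement of Theorem~\ref{1.3} enters.

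Next I would invoke the combinatorial lemma from the proof of \cite[Theorem 6.18]{BM}: there exist pairwise disjoint sets $A_p\subseteq\mathbb{N}$, each of positive lower density, that are mutually separated in the sense that distinct elements $n\in A_p$, $m\in A_{p'}$ obey a gap bound such as $|n-m|\ge p+p'$, and with $\min A_p$ growing fast enough to absorb the convergence moduli of (a) and (b) for $y_1,\dots,y_p$. I would then define the candidate vector
\[
z=\sum_{p\ge 1}\ \sum_{m\in A_p} x_{m^q}^{(p)} .
\]
Convergence of this unordered double series in the $F$-space $X$ would follow from hypothesis (b): for each fixed $p$ the inner sum $\sum_{m\in A_p}x_{m^q}^{(p)}$ is a subseries of $\sum_{n\ge 0}x_{(n+r)^q-r^q}^{(p)}$ at $r=0$, hence unconditionally convergent, while the separation/dilution of the $A_p$ makes the outer sum over $p$ converge, the tails being controlled by the uniform unconditional convergence (and by \eqref{qq} when one passes to a normed setting in the applications).

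The heart of the argument is the orbit estimate. Fixing $\varepsilon>0$, $p_0\ge 1$, and $k\in A_{p_0}$, I would apply $T^{k^q}$ termwise and use (c) — which gives $T^{k^q}x_{m^q}^{(p)}=x_{m^q-k^q}^{(p)}$ for $m>k$, $\;T^{k^q}x_{k^q}^{(p_0)}=y_{p_0}$, and $T^{k^q}x_{m^q}^{(p)}=T^{k^q-m^q}y_p$ for $m<k$ — to obtain the decomposition
\[
T^{k^q}z=y_{p_0}+\sum_{p}\ \sum_{\substack{m\in A_p\\ m<k}}T^{k^q-m^q}y_p+\sum_{p}\ \sum_{\substack{m\in A_p\\ m>k}}x_{m^q-k^q}^{(p)} .
\]
The middle ``backward'' sum is handled by (a): its exponents $k^q-m^q$ occur among those in $\sum_{n=0}^{k}T^{k^q-(k-n)^q}(y_p)$, and the separation of the $A_p$ forces the occurring indices into the uniform tail, so this sum is small uniformly in $k$. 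The last ``forward'' sum is handled by (b) with $r=k$, since $x_{m^q-k^q}^{(p)}=x_{(n+k)^q-k^q}^{(p)}$ with $n=m-k\ge 1$, again a uniformly small tail. Summing the per-$p$ estimates against the dilution of the $A_p$ should yield $\|T^{k^q}z-y_{p_0}\|<\varepsilon$ (or its $F$-metric analogue) for every $k\in A_{p_0}$.

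Finally, given any nonempty open $U\subseteq X$, density of $\{y_p\}$ supplies some $y_{p_0}$ with a ball about it contained in $U$; the estimate then shows $\{k^q:k\in A_{p_0}\}\subseteq\{n:T^nz\in U\}$ up to finitely many exceptions, and by the density computation above this target set has positive $q$-lower density. Hence $z$ is $q$-frequently hypercyclic. I expect the main obstacle to be making the two error estimates \emph{uniform in $k$} while remaining summable over $p$: this is precisely where the combinatorial lemma must be tuned to the $q$-power spacing, so that for every $k$ the cross-term exponents $k^q-m^q$ and $m^q-k^q$ fall beyond the convergence thresholds of (a) and (b), and where completeness of the $F$-space together with the uniform unconditional convergence is used to pass from finite partial sums to the full series.
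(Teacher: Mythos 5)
Your proposal is correct and follows essentially the same route as the paper: both reduce the problem to frequent hypercyclicity of the sequence $(T^{n^q})$ via the observation that $\{k^q:k\in A\}$ has $q$-lower density equal to $\underline{\mathrm{dens}}(A)$, invoke the separation lemma (Lemma 6.19 of \cite{BM}) to produce density-positive, mutually separated sets adapted to the convergence moduli of (a) and (b), and take the hypercyclic vector $\sum_{p}\sum_{m\in A_p}x_{m^q}^{(p)}$ with the standard backward/forward decomposition of $T^{k^q}z$. Your write-up in fact supplies more of the orbit estimate than the paper, which leaves those details to the proof of Theorem 6.18 in \cite{BM}.
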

\begin{proof}
Without loss of generality, one may assume that $D$ is countable. Write $D=\{x_k:k\in \mathbb{N}\}$ and fix $(\epsilon_k)$ such that $k\epsilon_k+\sum_{j\geq k+1}\epsilon_j \rightarrow 0$. For each $x_k$, there exists a sequence $(x_{n,k})_{n\geq 0}$ with the conditions in the hypotheses being satisfied. By (a) and (b), it is possible to find an increasing sequence $(N_k)$ of natural numbers such that $\|\sum_{n\in F} T^{r^q-(r-n)^q}(x_i)\| <\epsilon_k$ for $F\subset [N_k,\infty)\cap \{1,...,r\}$, and $\|\sum_{n\in G} x_{(n+r)^q-r^q,i}\|<\epsilon_k$ for $G\subset [N_k,\infty)$, uniformly in $r\geq 0$, where $1\leq i\leq k$. By Lemma 6.19 of \cite{BM}, corresponding to $(N_k)$, we find a sequence $(J_k)$ of subsets of $\mathbb{N}$ such that $\underline{dens}(J_k)>0$, $\min (J_k)\geq k$ and $|m-n|\geq N_k+N_j$ for all $m\in J_k$, $n\in J_j$ and $m\neq n$. With these properties, the vector $x=\sum_{\ell \geq 1}\sum_{n\in J_\ell} x_{n^q,\ell}$ is a frequently hypercyclic vector for the sequence $(T^{n^q})$, and thus it is a $q$-frequently hypercyclic vector for $T$.
\end{proof}

Let $y\otimes x^*$ be the one-rank operator $x\rightarrow x^*(x)y$, where $y\in Y$
and $x^*\in X^*$ and $(\mathcal{I}(X,Y),\|.\|_{\mathcal{I}(X,Y)})$ be a
Banach space of operators from $X$ to $Y$ such that the set of
all finite-rank operators is $\|.\|_{\mathcal{I}(X,Y)}$-dense in
$\mathcal{I}(X,Y)$ and $\|y\otimes x^*\|_{\mathcal{I}(X,Y)}=\|y\|\|x^*\|$ for all $y\in Y$ and $x^*\in
X^*$. We have the following result concerning the
separability of $\mathcal{L}(X,Y)$ with respect to the topologies
SOT and COT, and of $\mathcal{K}(X,Y)$ in the operator norm topology.

\begin{prop}\label{P}
Let $X$ and $Y$ be separable Banach spaces. Then the following are true.\\
$(1)$ If $D$ is a countable dense subset of $Y$ and $\Phi$ is a
countable weak$^*$-dense subset of $X^*$, then the set
\begin{equation*}\label{gdphi}
\displaystyle \mathcal{G}_{D,\Phi}=\left \{\sum_{n=1}^{N}y_n\otimes
x_n^*: y_n\in D, x_n^*\in \Phi, N\in \mathbb{N} \right\}
\end{equation*}
is a countable SOT-dense subset of
$\mathcal{L}(X,Y)$.\\
$(2)$ If $X^*$ is separable and $\Phi$ is norm-dense, then the above
set $\mathcal{G}_{D,\Phi}$ is $\|.\|_{\mathcal{I}(X,Y)}$-dense in
$\mathcal{I}(X,Y)$.\\
$(3)$ Suppose that $X^*$ is separable and $Y$ has the AP. If $\Phi$
is norm-dense, then $\mathcal{G}_{D,\Phi}$ is norm-dense in
$\mathcal{K}(X,Y)$ and COT-dense in $\mathcal{L}(X,Y)$.
\end{prop}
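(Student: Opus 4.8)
The plan is to treat all three parts by the same mechanism: reduce to approximating a finite-rank operator $F=\sum_{k=1}^{d} y_k\otimes x_k^*$, and then replace each $y_k$ by a nearby $\tilde y_k\in D$ and each $x_k^*$ by a nearby $\tilde x_k^*\in\Phi$, so that $\tilde F=\sum_k \tilde y_k\otimes \tilde x_k^*\in\mathcal{G}_{D,\Phi}$. The one computation underlying every estimate is the bilinear telescoping identity $\tilde y_k\otimes \tilde x_k^*-y_k\otimes x_k^*=(\tilde y_k-y_k)\otimes \tilde x_k^*+y_k\otimes(\tilde x_k^*-x_k^*)$. What differs between the three parts is only $(i)$ how the finite-rank operator $F$ is produced and $(ii)$ in which topology the functional perturbation $x_k^*\mapsto\tilde x_k^*$ must be controlled.

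For $(1)$ the countability of $\mathcal{G}_{D,\Phi}$ is immediate, being the set of finite sums of products of elements of the countable sets $D$ and $\Phi$; so the content is SOT-density. A basic SOT-neighbourhood of $S\in\mathcal{L}(X,Y)$ is determined by finitely many vectors $x_1,\dots,x_m\in X$ and an $\epsilon>0$. I would let $M=\mathrm{span}\{x_1,\dots,x_m\}$, pick a basis $e_1,\dots,e_d$ of $M$ with dual functionals extended to $X^*$ by Hahn--Banach, and set $F=\sum_{k=1}^{d}(Se_k)\otimes e_k^*$, a finite-rank operator that agrees with $S$ on all of $M$, hence $Fx_i=Sx_i$. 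Since $D$ is dense I can take $\tilde y_k\in D$ with $\|\tilde y_k-Se_k\|$ small, and since $\Phi$ is weak$^*$-dense I can take $\tilde x_k^*\in\Phi$ with $|\tilde x_k^*(x_i)-e_k^*(x_i)|$ small for each of the finitely many test points $x_i$. The telescoping identity, evaluated at $x_i$, then bounds $\|\tilde F x_i-Sx_i\|$ by a sum of the form $\sum_k\big(|\tilde x_k^*(x_i)|\,\|\tilde y_k-Se_k\|+|\tilde x_k^*(x_i)-e_k^*(x_i)|\,\|Se_k\|\big)<\epsilon$; here weak$^*$-closeness at the finitely many $x_i$ is exactly what is required.

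For $(2)$ and the norm part of $(3)$ the reduction to finite rank is handed to us: finite-rank operators are $\|\cdot\|_{\mathcal{I}(X,Y)}$-dense in $\mathcal{I}(X,Y)$ by hypothesis, and they are operator-norm dense in $\mathcal{K}(X,Y)$ by the AP of $Y$ (as recalled in the Preliminaries). So it suffices to approximate a single $F=\sum_k y_k\otimes x_k^*$ in the relevant norm. Now I would perturb both factors in norm, using that $D$ is dense in $Y$ and that $\Phi$ is norm-dense in the separable space $X^*$. Combining the telescoping identity with the cross-norm identity $\|y\otimes x^*\|=\|y\|\,\|x^*\|$ (the $\mathcal{I}$-norm by hypothesis, the operator norm for $\mathcal{K}(X,Y)$), each summand is bounded by $\|\tilde y_k-y_k\|\,\|\tilde x_k^*\|+\|y_k\|\,\|\tilde x_k^*-x_k^*\|$, and summing over the finitely many $k$ makes $\|\tilde F-F\|$ as small as desired.

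The main obstacle is the COT part of $(3)$, since there $S$ need not be compact yet must be approximated uniformly on a compact set $K\subset X$. The key step is to apply the AP of $Y$ to the compact set $S(K)$: choose a finite-rank $P$ on $Y$ with $\|Py-y\|<\epsilon$ for $y\in S(K)$, so that $PS$ has finite rank (its range lies in the finite-dimensional range of $P$) and $\sup_{x\in K}\|PSx-Sx\|<\epsilon$. Writing $PS=\sum_k y_k\otimes x_k^*$ and perturbing in norm exactly as in $(2)$, the telescoping estimate is then made uniform over $K$ by using that $K$ is bounded, say $\|x\|\le c$ on $K$, giving $\sup_{x\in K}\|\tilde Fx-PSx\|\le c\sum_k\big(\|\tilde x_k^*\|\,\|\tilde y_k-y_k\|+\|y_k\|\,\|\tilde x_k^*-x_k^*\|\big)$. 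I expect the delicate point to be precisely that norm-closeness of the functionals, and not merely the weak$^*$-closeness that sufficed in $(1)$, is indispensable for uniform control over all of $K$; this is exactly why $(3)$ assumes $X^*$ separable and $\Phi$ norm-dense.
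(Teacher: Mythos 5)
Your proposal is correct and follows essentially the same route as the paper: the paper's (very terse) proof reduces each part to approximating finite-rank operators --- citing the standard argument of Grosse-Erdmann and Peris for the SOT case $(1)$ and invoking the density of finite-rank operators together with the cross-norm identity $\|y\otimes x^*\|=\|y\|\,\|x^*\|$ for $(2)$ and $(3)$ --- which is exactly the telescoping perturbation scheme you carry out in detail. Your explicit treatment of the COT part of $(3)$, applying the approximation property of $Y$ to the compact set $S(K)$ to replace $S$ by the finite-rank operator $PS$, is the intended (and correct) filling-in of what the paper leaves implicit.
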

\begin{proof}

The proof of $(1)$ is similar to the case of $X=Y$, proved in \cite{PE}, p. 277. Further, by the properties of $\mathcal{I}(X,Y)$ mentioned above, part $(2)$ follows since we can approximate every operator of finite rank by elements of $\mathcal{G}_{D,\Phi}$ in the norm $\|.\|_{\mathcal{I}(X,Y)}$. To get part $(3)$, let us assume that $Y$ has the AP. Then the space $\mathcal{F}(X,Y)$ of all finite-rank
operators is norm-dense in $\mathcal{K}(X,Y)$ for every Banach space $X$. Moreover, $\|v\otimes u^*\|_{op}=\|v\|\|u^*\|$ for all $v\in Y$ and $u^*\in X^*$.
\end{proof}

Using the above proposition, we prove

\begin{thm}\label{FHC}
Let $R\in \mathcal{L}(Y)$ and $T \in \mathcal{L}(X)$ for separable
Banach spaces $X$ and $Y$ and $q\in \mathbb{N}$. Let $\mathcal{D}$
be a norm-dense set in $Y$ and $\Phi$ be a countable weak*-dense set
in $X^*$. Suppose that for each $(y,x^*)\in \mathcal{D}\times \Phi$, there exist sequences $(y_n)_{n\geq 0}$ in $Y$ and
$(x^*_n)_{n\geq 0}$ in $X^*$ such that
\begin{itemize}
\item[\emph{(a)}] the series $\displaystyle \sum_{n=0}^{r} R^{r^q-(r-n)^q}(y)\otimes (T^*)^{r^q-(r-n)^q}(x^*)$ and
$\displaystyle \sum_{n=1}^{\infty} y_{(n+r)^q-r^q}\otimes x^*_{(n+r)^q-r^q}$
are unconditionally convergent in $(\mathcal{L}(X,Y),\|.\|_{op})$, uniformly in $r\geq 0$; and
\item[\emph{(b)}] $R^{n^q}y_{n^q}=y$, $(T^*)^{n^q}x^*_{n^q}=x^*$,
$R^{n^q}y_{m^q}=y_{m^q-n^q}$, and $(T^*)^{n^q}x^*_{m^q}=x^*_{m^q-n^q}$ for all $m>n\geq 0$.
\end{itemize}
Then the following assertions hold.\\ $\emph{(i)}$ If
$T^*(\Phi)\subseteq \Phi$, $\{x_n^*\}\subseteq \Phi$, then
$C_{R,T}$ is $q$-frequently hypercyclic on $(\mathcal{L}(X,Y),\emph{SOT})$.\\
$\emph{(ii)}$ If $Y$ has the AP and the set $\Phi$ is
norm-dense in $X^*$, then $C_{R,T}$ is $q$-frequently hypercyclic on
$\left(\mathcal{K}(X,Y),\|.\|_{op}\right)$ and
$\left(\mathcal{L}(X,Y),\emph{COT}\right)$.
\end{thm}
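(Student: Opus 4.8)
The plan is to verify the hypotheses (a), (b), (c) of the $q$-FHC Criterion (Theorem \ref{1.3}) for the operator $C_{R,T}$ on the appropriate space, using the dense set supplied by Proposition \ref{P}. For assertion (i), the natural dense set is $\mathcal{G}_{D,\Phi}$ from Proposition \ref{P}(1), which is SOT-dense in $\mathcal{L}(X,Y)$; for assertion (ii), the same set is norm-dense in $\mathcal{K}(X,Y)$ and COT-dense in $\mathcal{L}(X,Y)$ by Proposition \ref{P}(3), once $Y$ has the AP and $\Phi$ is norm-dense. Thus in both cases it suffices to build, for each rank-one operator $y\otimes x^*$ with $(y,x^*)\in \mathcal{D}\times\Phi$, a sequence $(S_n)_{n\geq 0}$ of operators satisfying the three conditions, and then extend by linearity to finite sums in $\mathcal{G}_{D,\Phi}$.

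First I would record the action of $C_{R,T}$ on rank-one operators: a direct computation gives $C_{R,T}(y\otimes x^*) = R\,(y\otimes x^*)\,T = (Ry)\otimes (T^*x^*)$, and by iteration $C_{R,T}^{k}(y\otimes x^*) = (R^{k}y)\otimes ((T^*)^{k}x^*)$. With this identity, I would define the candidate backward-orbit sequence by $S_{n} = y_{n}\otimes x^*_{n}$, where $(y_n)$ and $(x^*_n)$ are the sequences provided in the hypothesis, and set $S_0 = y\otimes x^*$. Condition (c) of Theorem \ref{1.3} then reduces to checking $C_{R,T}^{n^q}(S_{n^q}) = S_0$ and $C_{R,T}^{n^q}(S_{m^q}) = S_{m^q-n^q}$ for $m>n$; applying the iteration formula, these become exactly $R^{n^q}y_{n^q}=y$, $(T^*)^{n^q}x^*_{n^q}=x^*$, $R^{n^q}y_{m^q}=y_{m^q-n^q}$, and $(T^*)^{n^q}x^*_{m^q}=x^*_{m^q-n^q}$, which are precisely hypothesis (b) of the present theorem. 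So (c) is immediate.

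Next I would verify the unconditional-convergence conditions (a) and (b) of Theorem \ref{1.3}. Here the two series appearing in hypothesis (a) of the present theorem are designed to match: $\sum_{n=0}^{r} C_{R,T}^{r^q-(r-n)^q}(y\otimes x^*) = \sum_{n=0}^{r} (R^{r^q-(r-n)^q}y)\otimes ((T^*)^{r^q-(r-n)^q}x^*)$ is exactly the first series in (a), and $\sum_{n\geq 1} S_{(n+r)^q-r^q} = \sum_{n\geq 1} y_{(n+r)^q-r^q}\otimes x^*_{(n+r)^q-r^q}$ is the second. Both are assumed unconditionally convergent in $(\mathcal{L}(X,Y),\|.\|_{op})$ uniformly in $r\geq 0$. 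The point to check is that for a finite sum in $\mathcal{G}_{D,\Phi}$, say $x = \sum_{i=1}^{M} y^{(i)}\otimes (x^*)^{(i)}$, the required uniform unconditional convergence for the combined sequence follows by summing finitely many uniformly-unconditionally-convergent series, which preserves the property. For assertion (i) I must additionally confirm that the backward iterates stay inside the data needed to form elements of $\mathcal{L}(X,Y)$ and that the hypercyclic vector is well-defined: the assumption $T^*(\Phi)\subseteq\Phi$ and $\{x^*_n\}\subseteq\Phi$ guarantees all the relevant functionals remain in $\Phi$, so the constructed series of rank-one operators are legitimately handled within the SOT framework. Finally, I would note that the hypercyclic vector produced by Theorem \ref{1.3} is $q$-frequently hypercyclic for $C_{R,T}$ on the stated space.

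I expect the main obstacle to be the convergence bookkeeping rather than the algebra. The algebraic matching of conditions is essentially automatic once the iteration formula $C_{R,T}^{k}(y\otimes x^*)=(R^ky)\otimes((T^*)^kx^*)$ is in hand. The delicate point is ensuring that \emph{uniform} unconditional convergence transfers correctly: one must invoke the fact that a finite sum of series that are unconditionally convergent uniformly in $r$ is again unconditionally convergent uniformly in $r$, and that the norm on $\mathcal{G}_{D,\Phi}$-elements controls the rank-one building blocks appropriately. For the SOT case (i) a further subtlety is that SOT-convergence of the rank-one series must be reconciled with the norm-convergence hypothesis in (a); here one uses that norm (operator-norm) convergence implies SOT-convergence, so the operator-norm hypotheses are strong enough to supply what Theorem \ref{1.3} needs in the weaker SOT topology, and the containment $T^*(\Phi)\subseteq\Phi$ keeps the construction inside the countable SOT-dense set $\mathcal{G}_{D,\Phi}$. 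Once these convergence details are settled, the conclusion follows directly from Theorem \ref{1.3}.
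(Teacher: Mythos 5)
Your overall strategy is the same as the paper's: reduce to rank-one operators via the identity $C_{R,T}^{k}(y\otimes x^*)=(R^{k}y)\otimes((T^*)^{k}x^*)$, take $S_n=y_n\otimes x^*_n$ as the backward orbit, check that hypothesis (b) gives condition (c) of Theorem \ref{1.3} and hypothesis (a) gives conditions (a) and (b), extend by linearity to $\mathcal{G}_{D,\Phi}$, and invoke Proposition \ref{P} for density. That part is correct and matches the paper.

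The one place where your argument is not yet complete is part (i). You propose to apply Theorem \ref{1.3} ``within the SOT framework,'' reconciling the operator-norm hypotheses with SOT by noting that norm convergence implies SOT convergence. But Theorem \ref{1.3} is stated for separable $F$-spaces, and $(\mathcal{L}(X,Y),\mathrm{SOT})$ is not an $F$-space (it is not metrizable when $X$ is infinite dimensional), so the criterion cannot be applied there directly. The paper resolves this by introducing the separable Banach space $\mathcal{L}_\Phi=\overline{\mathrm{span}}^{\|\cdot\|_{op}}\{y\otimes x^*: y\in Y,\ x^*\in\Phi\}$, checking that $T^*(\Phi)\subseteq\Phi$ forces $C_{R,T}(\mathcal{L}_\Phi)\subseteq\mathcal{L}_\Phi$, applying the $q$-FHC Criterion to $C_{R,T}$ on $(\mathcal{L}_\Phi,\|\cdot\|_{op})$, and only then transferring the conclusion to $(\mathcal{L}(X,Y),\mathrm{SOT})$ via the continuous, dense-range inclusion $\mathcal{L}_\Phi\hookrightarrow(\mathcal{L}(X,Y),\mathrm{SOT})$ (the comparison principle). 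Your remark about norm convergence implying SOT convergence is exactly the germ of this transfer step, but you need to make the intermediate Banach space explicit for the criterion to be legitimately invoked. With that adjustment, part (ii) then falls out as in the paper, since under the AP and norm-density of $\Phi$ one gets $\mathcal{L}_\Phi=\mathcal{K}(X,Y)$.
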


\begin{proof}
(i) As $Y$ is a separable Banach space, we may assume that $D$ is countable.
Let
\begin{equation*}
\displaystyle
\mathcal{L}_\Phi=\overline{\text{span}}^{\|.\|_{op}}\left \{y\otimes
x^*: y\in Y, x^*\in \Phi\right \},
\end{equation*}
where the closure is taken in the operator norm $\|.\|_{op}$. Then
$\mathcal{L}_\Phi$ is a separable Banach space since the set
\begin{center}
$\displaystyle \mathcal{G}_{D,\Phi}=\left \{ \sum_{n=1}^
{N}y_n\otimes x_n^*: y_n\in D, x_n^*\in \Phi, N\in
\mathbb{N}\right\}$
\end{center}
is countable and norm-dense in $\mathcal{L}_\Phi$.

If $G=\sum_{j\leq N} y_j\otimes x_k^*$, then $C_{R,T}(G)=\sum_{j\leq N} R(y_j)\otimes
T^*(x^*_j)$. As the map $C_{R,T}$ is continuous and
$T^*(\Phi)\subseteq \Phi$, it follows that $C_{R,T}$ takes
$\mathcal{L}_\Phi$ to itself.

To establish the $q$-frequent hypercyclicity of the operator
$C_{R,T}$ in SOT, we first show that $C_{R,T}$ satisfies the
conditions of Theorem \ref{1.3} in the space
$\mathcal{L}_\Phi$. Let $F=\sum_{j=1} ^{k} y_j\otimes x_j^*\in \mathcal{G}_{\mathcal{D},\Phi}$. For each $(y_j,x^*_j)$, let $(y_{j,n})$ and $(x^*_{j,n})$ be some sequences, respectively in $Y$ and $\Phi$, as in the hypothesis. Then
\begin{center}
$\displaystyle F_{n}=\sum_{j\leq k}y_{j,n}\otimes x^*_{j,n} \in \mathcal{L}_\Phi$, $n\geq 0$.
\end{center}
Therefore, by the assumption (a) of our theorem, both the series\\
$\sum_{n\leq r}(C_{R,T})^{r^q-(r-n)^q}(F)=\sum_{j\leq k} \sum_{n\leq r}R^{r^q-(r-n)^q}(y_j)\otimes (T^*)^{r^q-(r-n)^q}(x^*_j)$ and $\sum_{n\geq 0}F_{(n+r)^q-r^q}=\sum_{j\leq k}\sum_{n\geq 0}y_{j,(n+r)^q-r^q}\otimes x^*_{j,(n+r)^q-r^q} $ converge
unconditionally in $\mathcal{L}_\Phi$ with respect to the operator
norm, uniformly in $r\geq 0$.

Further, we have
\begin{align*}
\displaystyle (C_{R,T})^{n^q}F_{m^q}&=\sum_{j=1} ^{k}
R^{n^q}y_{j,m^q}\otimes
(T^*)^{n^q}x^*_{j,m^q}\\&=\begin{cases}
\sum_{j=1} ^{k} y_{j,m^q-n^q}\otimes x^*_{j,m^q-n^q},~~~~ m>n\\
~~F,~~~~~~~~m=n.
\end{cases}
\end{align*}
by hypotheses. Thus $(C_{R,T})^{n^q}F_{m^q}=F_{m^q-n^q}$ if $m>n$ and
$(C_{R,T})^{n^q}F_{n^q}=F, n\geq0$. So $C_{R,T}$ is $q$-frequently hypercyclic on $\mathcal{L}_\Phi$ with respect to the
operator norm topology. As $\mathcal{G}_{D,\Phi}$ is SOT-dense in $\mathcal{L}(X,Y)$ by
Proposition \ref{P}, the operator $C_{R,T}$ is $q$-frequently
hypercyclic on $(\mathcal{L}(X,Y),SOT)$. This establishes part (i).

Let us now prove part (ii). If $Y$ has the AP and the set $\Phi$ is
norm-dense in $X^*$, then by Proposition \ref{P}(3),
$\mathcal{G}_{D,\Phi}$ is dense in $\mathcal{K}(X,Y)$ with respect
to the operator norm and so $\mathcal{L}_\Phi=\mathcal{K}(X,Y)$. Consequently, $C_{R,T}$ is $q$-frequently hypercyclic
on $(\mathcal{K}(X,Y),\|.\|_{op})$ and $(\mathcal{L}(X,Y),\text{COT})$.
\end{proof}

For applications of Theorem \ref{FHC}, we require the following lemmas.

\begin{lem}\label{lomma1}
Let $X$ and $Y$ be Banach spaces. If $\displaystyle
\sum_{n=1}^{\infty} u_{n,j}$ is unconditionally convergent in $Y$, uniformly in $j\geq 0$, and $\{u^*_{n,k}\}\subset X^*$ is such that
$\{u^*_{n,k}: n\geq N_0, k\geq 1\}$ is norm-bounded for some $N_0\in \mathbb{N}$,
then $\sum_{n} u_{n,j}\otimes u^*_{n,k}$ is unconditionally convergent in
$(\mathcal{L}(X,Y),\|.\|_{op})$, uniformly in $j,k\in \mathbb{N}$.
\end{lem}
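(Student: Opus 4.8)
The plan is to verify directly the Cauchy-type criterion that defines uniform unconditional convergence in the Banach space $(\mathcal{L}(X,Y),\|.\|_{op})$: given $\varepsilon>0$, I must produce an $N\in\mathbb{N}$ so that $\|\sum_{n\in F} u_{n,j}\otimes u^*_{n,k}\|_{op}<\varepsilon$ for every finite $F\subset[N,\infty)$ and all $j,k$. Since the operator norm of a finite sum of rank-one operators is computed by testing against the unit ball of $X$, I would fix $x\in X$ with $\|x\|\le 1$ and evaluate
\[
\Big(\sum_{n\in F} u_{n,j}\otimes u^*_{n,k}\Big)(x)=\sum_{n\in F} u^*_{n,k}(x)\,u_{n,j},
\]
which reduces the problem to estimating a scalar-weighted sum of the vectors $u_{n,j}$ in $Y$, with weights $\lambda_n=u^*_{n,k}(x)$.

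The heart of the argument is then a single application of inequality (\ref{qq}) with $x_n=u_{n,j}$ and $\lambda_n=u^*_{n,k}(x)$, giving
\[
\Big\|\sum_{n\in F} u^*_{n,k}(x)\,u_{n,j}\Big\|
\le 4\,\sup_{n\in F}|u^*_{n,k}(x)|\,\sup_{G\subseteq F}\Big\|\sum_{n\in G} u_{n,j}\Big\|.
\]
For the first supremum I use $|u^*_{n,k}(x)|\le\|u^*_{n,k}\|\,\|x\|\le\|u^*_{n,k}\|$, so that once $F\subset[N_0,\infty)$ it is bounded by $M:=\sup\{\|u^*_{n,k}\|:n\ge N_0,\ k\ge1\}<\infty$, uniformly in $k$ and in $x$. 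For the second supremum I invoke the hypothesis that $\sum_n u_{n,j}$ converges unconditionally, uniformly in $j$: given $\varepsilon>0$ there is $N_1$ such that $\|\sum_{n\in G} u_{n,j}\|<\varepsilon/(4M)$ for every finite $G\subset[N_1,\infty)$ and all $j$, and since every $G$ appearing above satisfies $G\subseteq F\subseteq[N_1,\infty)$, this estimate applies throughout the supremum.

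Setting $N=\max(N_0,N_1)$ and taking the supremum over $\|x\|\le1$ then yields $\|\sum_{n\in F} u_{n,j}\otimes u^*_{n,k}\|_{op}\le\varepsilon$ for all finite $F\subset[N,\infty)$ and all $j,k$, which is exactly the desired uniform unconditional convergence (the degenerate case $M=0$, where the tail operators vanish, being trivial). I do not expect a serious obstacle here, as inequality (\ref{qq}) does essentially all the work; the only points requiring care are bookkeeping ones. Specifically, I must ensure the bound $M$ is invoked only on the tail $n\ge N_0$ by choosing $N\ge N_0$, and I must check that none of the three estimates---the bound $M$, the tail estimate from uniform unconditional convergence, and the passage to the operator norm---secretly depends on $j$, $k$, or the chosen unit vector $x$, so that the final bound is genuinely uniform in $j$ and $k$ as the statement demands.
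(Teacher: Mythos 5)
Your proof is correct and follows essentially the same route as the paper: apply the rank-one sum to a unit vector, invoke inequality \eqref{qq} with $\lambda_n=u^*_{n,k}(x)$, bound the coefficient supremum by the uniform norm bound $M$ on the tail $n\ge N_0$, and control the inner supremum via the uniform unconditional convergence of $\sum_n u_{n,j}$. The only cosmetic difference is that the paper picks a single $N>N_0$ at the outset rather than taking $N=\max(N_0,N_1)$ at the end.
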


\begin{proof}
By definition, for a given $\epsilon>0$, one can choose a natural number $N>N_0$ such that
$\|\sum_{n\in F} u_{n,j} \| < \epsilon$ for every finite set set $F\subset [N,\infty)\cap \mathbb{N}$ and all $j\geq 1$.

Let $M\in \mathbb{R}$ be such that $\|u^*_{n,k}\|\leq M$ $\forall$ $n\geq N_0,k\geq 1$. Using the inequality \eqref{qq}, it follows that
\begin{center}
$\displaystyle \left \|\sum_{n\in F} u^*_{n,k}(x) u_{n,j} \right \| \leq 4M~\sup_{G\subseteq F} \left \|\sum_{n\in G} u_{n,j} \right \|<4M\epsilon$,
\end{center}
for $j,k\geq 1$ and $\|x\|\leq 1$. Thus
\begin{center}
$\displaystyle \left \|\sum_{n\in F} u_{n,j}\otimes u^*_{n,k} \right \|_{op}<4M\epsilon$.
\end{center}
\end{proof}

\begin{lem}\label{lomma2}
Let $X$ and $Y$ be Banach spaces. If $\displaystyle
\sum_{n=1}^{j} u_{n,j}$ is unconditionally convergent in $Y$, uniformly in $j\in \mathbb{N}$, and
$\{u^*_{n,j}: n\geq N_0, k\geq 1\}$ is norm-bounded in $X^*$ for some $N_0\in \mathbb{N}$,
then the series $\sum_{n}^{j} u_{n,j}\otimes u^*_{n,j}$ is unconditionally convergent in
$(\mathcal{L}(X,Y),\|.\|_{op})$, uniformly in $j\in \mathbb{N}$.
\end{lem}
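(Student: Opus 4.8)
The plan is to imitate the proof of Lemma \ref{lomma1} almost verbatim, the only structural differences being that the upper index of the inner sum is now $n=j$ rather than $\infty$ and that the functionals carry the same index $j$ as the vectors rather than an independent index $k$. Neither change affects the estimate, so I would first unwind the hypothesis of uniform unconditional convergence of the finite sums $\sum_{n=1}^{j} u_{n,j}$: given $\epsilon>0$, I produce an integer $N>N_0$ such that $\|\sum_{n\in F}u_{n,j}\|<\epsilon$ for every finite set $F\subset[N,\infty)$ and every $j\geq 1$, with the convention that only indices $n\leq j$ actually contribute.

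Next I would fix a constant $M$ with $\|u^*_{n,j}\|\leq M$ for all $n\geq N_0$ and $j\geq 1$, as guaranteed by the boundedness hypothesis. For a finite set $F\subset[N,\infty)$ and any $x\in X$ with $\|x\|\leq 1$, I apply the inequality \eqref{qq} with the scalars $\lambda_n=u^*_{n,j}(x)$ and the vectors $u_{n,j}$. Since $|u^*_{n,j}(x)|\leq\|u^*_{n,j}\|\,\|x\|\leq M$ for $n\in F$, and since every subset $G\subseteq F$ again lies in $[N,\infty)$, the right-hand side of \eqref{qq} is bounded by $4M\sup_{G\subseteq F}\|\sum_{n\in G}u_{n,j}\|<4M\epsilon$.

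Taking the supremum over $\|x\|\leq 1$ then gives $\|\sum_{n\in F}u_{n,j}\otimes u^*_{n,j}\|_{op}<4M\epsilon$ for every finite $F\subset[N,\infty)$ and every $j$, which is precisely the uniform unconditional convergence of $\sum_{n} u_{n,j}\otimes u^*_{n,j}$ in $(\mathcal{L}(X,Y),\|.\|_{op})$. I do not expect any genuine obstacle here; the only point requiring a moment's care is to verify that identifying the two indices does not spoil uniformity. But since the functional bound $M$ is uniform in both $n\geq N_0$ and $j$, while the vector estimate is uniform in $j$, the resulting operator-norm estimate is automatically uniform in the single remaining index $j$, and the lemma follows.
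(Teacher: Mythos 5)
Your argument is correct and is exactly the proof the paper intends: the lemma is stated without proof precisely because it follows from inequality \eqref{qq} by the same estimate as Lemma \ref{lomma1}, with $\lambda_n=u^*_{n,j}(x)$ and the index $k$ identified with $j$. Your observation that the bound $4M\epsilon$ remains uniform in the single surviving index $j$ is the only point needing care, and you handle it correctly.
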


Recalling the $q$-FHC Criterion from Theorem
\ref{1.3}, we prove the $q$-frequent hypercyclicity of the
left multiplication operator $\mathfrak{L}_R(S)=RS$. This strengthens a result of A. Bonilla and K.-G. Grosse-Erdmann \cite{BE} about the SOT-frequent
hypercyclicity of $\mathfrak{L}_R$.

\begin{cor} \label{left}
Let $X$ be a separable Banach space and $R\in \mathcal{L}(X)$
satisfy the $q$-FHC Criterion. Then the following hold.\\
$\emph{(i)}$ $\mathfrak{L}_R$ is $q$-frequently hypercyclic on $(\mathcal{L}(X),\text{SOT})$.\\
$\emph{(ii)}$ If $X^*$ is separable and $X$ has the AP, then the
$\mathfrak{L}_R$ is $q$-frequently hypercyclic on
$(\mathcal{K}(X),\|.\|_{op})$ and $(\mathcal{L}(X),\text{COT})$.
\end{cor}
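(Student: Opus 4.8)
The plan is to realize $\mathfrak{L}_R$ as the special case $C_{R,I}$ of $C_{R,T}$ with $T=I$ the identity on $X$ (so that $Y=X$ and $T^*=I$ on $X^*$), and then invoke Theorem \ref{FHC}. First I would take $\mathcal{D}$ to be the dense set $D\subseteq X$ furnished by the $q$-FHC Criterion for $R$, and choose $\Phi$ to be a countable weak$^*$-dense subset of $X^*$ for part (i), respectively a countable norm-dense subset (available since $X^*$ is separable) for part (ii).

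The key idea is that, because $T^*=I$, the $X^*$-side sequences required by Theorem \ref{FHC} may be taken to be \emph{constant}. Concretely, given $(y,x^*)\in\mathcal{D}\times\Phi$, let $(y_n)_{n\geq0}$ be the sequence associated with $y$ by the $q$-FHC Criterion (Theorem \ref{1.3}) for $R$, and set $x^*_n=x^*$ for every $n$. Then hypothesis (b) of Theorem \ref{FHC} reduces on the $R$-side to the relations $R^{n^q}y_{n^q}=y$ and $R^{n^q}y_{m^q}=y_{m^q-n^q}$, which are precisely condition (c) of the Criterion, while on the $T^*$-side it holds trivially, since $(T^*)^{n^q}x^*=x^*$ and the constant sequence satisfies $x^*_{m^q-n^q}=x^*$.

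It then remains to check the unconditional uniform convergence in hypothesis (a). Here the two operator-valued series simplify, their $X^*$-factors all being the fixed functional $x^*$. For the first series, $\sum_{n=0}^{r}R^{r^q-(r-n)^q}(y)\otimes x^*$, unconditional convergence uniformly in $r$ follows from the unconditional uniform convergence of $\sum_{n=0}^{r}R^{r^q-(r-n)^q}(y)$ in $X$ (condition (a) of the Criterion) together with Lemma \ref{lomma2}, the constant family $\{x^*\}$ being trivially norm-bounded. For the second series, $\sum_{n\geq1}y_{(n+r)^q-r^q}\otimes x^*$, one applies Lemma \ref{lomma1} to the unconditionally convergent series $\sum_{n\geq0}y_{(n+r)^q-r^q}$ supplied by condition (b) of the Criterion. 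Thus hypothesis (a) of Theorem \ref{FHC} is satisfied.

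With (a) and (b) in hand, part (i) follows from Theorem \ref{FHC}(i), since $T^*(\Phi)=\Phi\subseteq\Phi$ and $\{x^*_n\}=\{x^*\}\subseteq\Phi$; and part (ii) follows from Theorem \ref{FHC}(ii), using that $X$ has the AP and $\Phi$ is norm-dense. The only delicate point is bookkeeping: one must confirm that the index shifts in (b) and the summation ranges in (a) match those in the Criterion and in the two lemmas (in particular that starting the first sum at $n=0$ is a harmless finite shift). Since the $X^*$-coordinate is held constant, the potentially hard step, namely the unconditional convergence of the operator-valued series, is exactly what Lemmas \ref{lomma1} and \ref{lomma2} dispatch, and no genuine obstacle arises; the corollary is in essence the $T=I$ instance of Theorem \ref{FHC}.
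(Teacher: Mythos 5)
Your proposal is correct and follows essentially the same route as the paper: specialize Theorem \ref{FHC} to $T=I$, take the constant sequence $x^*_n=x^*$, and verify hypothesis (a) via Lemmas \ref{lomma1} and \ref{lomma2}, choosing $\Phi$ weak$^*$-dense for (i) and norm-dense for (ii). No substantive difference from the paper's argument.
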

\begin{proof}
In Theorem \ref{FHC}, let $X=Y$ and $T$ be the identity operator on
$X$. Since $X$ is separable, the dual $X^*$ is weak$^*$-separable. So, we can choose $\Phi$ to be any
countable weak$^*$-dense subset of $X^*$. Since $R$ satisfies
the $q$-FHC Criterion, we find a set $D$ satisfying the conditions (a) and (b) of
Theorem \ref{1.3}. That is, for $x\in D$, there exists $(x_n)$ in $X$ with $x_0=x$ such that
\begin{equation*}
\sum_{n\leq r} R^{r^q-(r-n)^q}(x)~and~\sum_{n\geq 1}x_{(n+r)^q-r^q}~are~unconditionally
~~convergent~uniformly~in~r\geq 0,
\end{equation*}
and
\begin{equation*}
R^{n^q}x_{n^q}=x,~ R^{n^q}x_{m^q}=x_{m^q-n^q},~~ m>n,~n\geq 0.
\end{equation*}
Now we verify the condition $(a)$ of Theorem
\ref{FHC}. For each $x^*\in \Phi$, let $x_n^*=x$, $n\geq 0$. By Lemmas \ref{lomma1} and \ref{lomma2} we get (i).

To see (ii), take $\Phi$ as any norm-dense subset of $X^*$ in
Theorem \ref{FHC} and apply Theorem \ref{FHC}(ii).
\end{proof}

Similarly, one can prove the following results. We observe that if $\sum_{n} x_{n,j}$ is unconditionally convergent, uniformly in $j$, then there exists $N\in \mathbb{N}$ such that the set
$\{x_{n,j}:n\geq N,j\geq 1\}$ is bounded.

\begin{cor} \label{right} Suppose $X$ is a separable Banach space and $T\in
\mathcal{L}(X)$. Then the following are true.\\
\emph{(1)} Let $\Phi$ be a countable weak$^*$-dense subset of $X^*$. Suppose that for each $x^*\in \Phi$,
there exists $(x_n^*)$ in $\Phi$ with properties that $x^*_0-x^*$, the series $\sum_{n\leq r}(T^*)^{r^q-(r-n)^q}(x^*)$ and $\sum_{n\geq 1} x^*_{(n+r)^q-r^q}$ are unconditionally convergent in $(X^*,\|.\|)$, uniformly in $r$; and
$(T^*)^{n^q}x^*_{n^q}=x^*$, $(T^*)^{n^q}x^*_{m^q}=x^*_{m^q-n^q}$, $m>n$. If $T^*(\Phi)\subset \Phi$, then $\mathfrak{R}_T$ is $q$-frequently
hypercyclic on $(\mathcal{L}(X),\text{SOT})$. \\
$\emph{(2)}$ Assume that $X^*$ is separable and $X$ has the AP. If
$T^*$ satisfies the $q$-FHC Criterion, then
$\mathfrak{R}_T$ is $q$-frequently hypercyclic on
$(\mathcal{K}(X),\|.\|_{op})$ and $(\mathcal{L}(X),\text{COT})$.
\end{cor}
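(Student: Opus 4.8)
The plan is to derive Corollary~\ref{right} from Theorem~\ref{FHC} exactly as Corollary~\ref{left} was derived, but with the roles of the two tensor factors interchanged. Since $\mathfrak{R}_T(S)=ST=C_{I,T}(S)$, I would apply Theorem~\ref{FHC} with $Y=X$ and with $R$ equal to the identity operator on $X$. Under this specialization the first series in condition~(a) collapses to $\sum_{n\le r} y\otimes (T^*)^{r^q-(r-n)^q}(x^*)$, and the recurrences on the $Y$-component in condition~(b), namely $R^{n^q}y_{n^q}=y$ and $R^{n^q}y_{m^q}=y_{m^q-n^q}$, are satisfied trivially by the constant choice $y_n\equiv y$. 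The dual sequence $(x^*_n)$ is then precisely the one furnished by the hypothesis in part~(1), or by the $q$-FHC Criterion applied to $T^*$ in part~(2), and condition~(b) reduces to the stated recurrences for $T^*$.

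For part~(1), the hypotheses $T^*(\Phi)\subseteq\Phi$ and $\{x^*_n\}\subseteq\Phi$ are exactly what Theorem~\ref{FHC}(i) requires, so once condition~(a) is verified I obtain that $\mathfrak{R}_T$ is $q$-frequently hypercyclic on $(\mathcal{L}(X),\mathrm{SOT})$. For part~(2), since $X^*$ is separable I would take $\Phi$ to be a countable norm-dense (hence weak$^*$-dense) subset of $X^*$, read off the sequences $(x^*_n)$ from the $q$-FHC Criterion for $T^*$, and invoke Theorem~\ref{FHC}(ii), whose requirement that $Y$ have the AP is met because $Y=X$ has the AP. Thus both parts follow from Theorem~\ref{FHC} as soon as condition~(a) is checked.

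The one genuine point requiring work is therefore condition~(a): I must show that $\sum_{n\le r} y\otimes(T^*)^{r^q-(r-n)^q}(x^*)$ and $\sum_{n\ge 1} y\otimes x^*_{(n+r)^q-r^q}$ converge unconditionally in the operator norm, uniformly in $r$. Here the unconditionally convergent factor lives in $X^*$ while the bounded factor is the fixed vector $y\in Y$, which is the transpose of the situation treated in Lemmas~\ref{lomma1} and~\ref{lomma2}. This is the main obstacle, and I would handle it by recording the ``dual'' versions of those lemmas, in which the $Y$- and $X^*$-factors are interchanged. The cleanest route is through adjoints: for finite $F$ one has the operator-norm identity
\begin{equation*}
\left\|\sum_{n\in F} u_{n}\otimes u^*_{n}\right\|_{op}=\left\|\sum_{n\in F} u^*_{n}\otimes \widehat{u}_{n}\right\|_{op},
\end{equation*}
where $\widehat{u}_{n}\in Y^{**}$ denotes the canonical image of $u_n$, since passing to the transpose preserves the operator norm. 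Applying inequality~\eqref{qq} to the right-hand series---now with bounded scalars $y^*(u_n)$, $\|y^*\|\le1$, and the unconditionally summable vectors $u^*_n\in X^*$---gives the same $4M\epsilon$ estimate as in the proof of Lemma~\ref{lomma1}. Because the fixed vector $y$ is trivially bounded, the uniform boundedness hypothesis of these dual lemmas is automatic, and they verify condition~(a) in exactly the manner used for Corollary~\ref{left}.
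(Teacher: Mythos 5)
Your proposal is correct and follows essentially the route the paper intends (the paper omits the proof, saying only that it goes ``similarly'' to Corollary~\ref{left}): apply Theorem~\ref{FHC} with $R=I$ and the constant choice $y_n\equiv y$, so that the recurrences on the $Y$-side are trivial and condition~(a) reduces to the hypothesized unconditional convergence in $X^*$. Your adjoint-based ``dual'' version of Lemmas~\ref{lomma1} and~\ref{lomma2} is sound but not even needed here, since with a single fixed $y$ the partial sums factor as $y\otimes\bigl(\sum_{n\in F}x^*_n\bigr)$ and hence $\bigl\|\sum_{n\in F}y\otimes x^*_n\bigr\|_{op}=\|y\|\,\bigl\|\sum_{n\in F}x^*_n\bigr\|$ directly.
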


\begin{prop}\label{corollaryiso}
Let $R,T\in \mathcal{L}(X)$, where $R$ satisfies the $q$-FHC Criterion, and let $\Phi$ be a countable weak$^*$-dense
set in $X^*$. If for each $f\in \Phi$, there exists a bounded $(f_n)$ in $X^*$ such that $f_0=0$ and the set $\{(T^*)^{n}(f):n\geq0\}$ is bounded; and $(T^*)^{n^q}f_{n^q}=f$, $(T^*)^{n^q}f_{m^q}=f_{m^q-n^q}$ for $m>n\geq 0$, then the following hold.\\ $(1)$ If $T^*(\Phi)\subseteq \Phi$, then $C_{R,T}$ is
$q$-frequently hypercyclic on $(\mathcal{L}(X),\text{SOT})$.\\
$(2)$ If $\Phi$ is norm-dense, and $X$ has the AP, then $C_{R,T}$ is
$q$-frequently hypercyclic on $(\mathcal{K}(X),\|.\|_{op})$ and
$(\mathcal{L}(X),\text{COT})$.
\end{prop}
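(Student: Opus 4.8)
The plan is to obtain this proposition as a direct consequence of Theorem \ref{FHC} applied with $Y=X$, by supplying it with two families of auxiliary sequences: one governing the $X$-factor of the rank-one tensors, produced by the $q$-FHC Criterion for $R$, and one governing the $X^{*}$-factor, produced by the present hypothesis. Concretely, since $R$ satisfies the $q$-FHC Criterion, Theorem \ref{1.3} yields a dense set $\mathcal{D}\subseteq X$ such that each $y\in\mathcal{D}$ admits a sequence $(y_n)_{n\ge0}$ with $y_0=y$, with $\sum_{n=0}^{r}R^{r^q-(r-n)^q}(y)$ and $\sum_{n\ge0}y_{(n+r)^q-r^q}$ unconditionally convergent uniformly in $r\ge0$, and with $R^{n^q}y_{n^q}=y$ and $R^{n^q}y_{m^q}=y_{m^q-n^q}$ for $m>n$. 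On the dual side, the hypothesis provides for each $f\in\Phi$ a bounded sequence $(f_n)$ satisfying $(T^{*})^{n^q}f_{n^q}=f$ and $(T^{*})^{n^q}f_{m^q}=f_{m^q-n^q}$ for $m>n$, together with the boundedness of the orbit $\{(T^{*})^{n}f:n\ge0\}$. I would feed exactly these $(y_n)$ and $(f_n)$ into Theorem \ref{FHC}.

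The heart of the matter is checking hypothesis (a) of Theorem \ref{FHC}, and this is precisely what Lemmas \ref{lomma1} and \ref{lomma2} are tailored for. For the orbit series $\sum_{n=0}^{r}R^{r^q-(r-n)^q}(y)\otimes(T^{*})^{r^q-(r-n)^q}(f)$, the vector factors $R^{r^q-(r-n)^q}(y)$ converge unconditionally and uniformly in $r$ by the criterion for $R$, while the functionals $(T^{*})^{r^q-(r-n)^q}(f)$ form a norm-bounded family, being contained in the bounded orbit $\{(T^{*})^{m}f:m\ge0\}$; Lemma \ref{lomma2} (with index $j=r$, the $n=0$ term being a fixed rank-one operator that does not affect uniform unconditional convergence) then delivers uniform unconditional convergence of the tensor series in $(\mathcal{L}(X),\|\cdot\|_{op})$. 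For the second series $\sum_{n\ge1}y_{(n+r)^q-r^q}\otimes f_{(n+r)^q-r^q}$, the vector factors again sum unconditionally and uniformly by the criterion for $R$, and the functionals are bounded since $(f_n)$ is bounded, so Lemma \ref{lomma1} applies; here for $n\ge1$ one has $(n+r)^q-r^q\ge1$, so the value $f_0$ is never needed. Hypothesis (b) of Theorem \ref{FHC} is then immediate, the $R$-relations holding by the criterion for $R$ and the $(T^{*})$-relations by assumption.

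Granting (a) and (b), conclusion (2) is a direct application of Theorem \ref{FHC}(ii): since $X$ has the AP and $\Phi$ is norm-dense, $C_{R,T}$ is $q$-frequently hypercyclic on $(\mathcal{K}(X),\|\cdot\|_{op})$ and $(\mathcal{L}(X),\mathrm{COT})$, with no requirement that the $f_n$ belong to $\Phi$. The subtle step, which I expect to be the main obstacle, is conclusion (1) via Theorem \ref{FHC}(i): that part requires the dual sequences to lie in $\Phi$, since the auxiliary operators $F_n=\sum_{j}y_{j,n}\otimes f_{j,n}$ must belong to the invariant space $\mathcal{L}_\Phi$ on which $C_{R,T}$ acts (using $T^{*}(\Phi)\subseteq\Phi$). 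Thus for part (1) one must read the hypothesis with $(f_n)\subseteq\Phi$, exactly as in the companion statement Corollary \ref{right}(1); once $\{f_n\}\subseteq\Phi$ is ensured, the membership $F_n\in\mathcal{L}_\Phi$ holds and the verification above yields the $q$-frequent hypercyclicity of $C_{R,T}$ on $(\mathcal{L}(X),\mathrm{SOT})$.
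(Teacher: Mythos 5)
Your proposal is correct and follows essentially the same route the paper intends: the paper gives no separate argument for this proposition (it is introduced by ``Similarly, one can prove the following results''), and the intended proof is exactly your application of Theorem \ref{FHC} with $Y=X$, feeding in the sequences from the $q$-FHC Criterion for $R$ on the vector side and the hypothesized bounded sequences $(f_n)$ on the dual side, with condition (a) checked via Lemmas \ref{lomma1} and \ref{lomma2} using the boundedness of $\{(T^*)^n f\}$ and of $(f_n)$. Your observation that part (1) requires $(f_n)\subseteq\Phi$ (and, implicitly, that $f_0=0$ should read $f_0=f$) correctly identifies imprecisions in the paper's statement rather than gaps in your argument.
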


Let us now establish the $q$-frequent hypercyclicity of $C_{R,U}$ for a unitary $U$ and of the conjugate
operator $C_R(S)=RSR^*$.

\begin{cor}\label{conjugate}
Let $R$ satisfy the $q$-FHC Criterion in a
separable Hilbert space $H$. Then the operators $C_R$ and $C_{R,U}$
are $q$-frequently hypercyclic on $(\mathcal{K}(H),\|.\|_{op})$ and
$(\mathcal{L}(H),\text{COT})$, where $U\in \mathcal{L}(H)$ is a
unitary operator.
\end{cor}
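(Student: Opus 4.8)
The plan is to deduce both assertions from Theorem~\ref{FHC}(ii) applied with $Y=X=H$, after making the Riesz identification $H^*\cong H$ under which the Banach-space adjoint of an operator becomes its Hilbert adjoint. Writing $C_{R,T}(S)=RST$, the conjugate operator is $C_R=C_{R,R^*}$, so there $T=R^*$ and hence $T^*=R$; for the second map we have $T=U$ and, $U$ being unitary, $T^*=U^*=U^{-1}$. In both cases $H$ has the approximation property and $X^*\cong H$ is separable, so once the hypotheses (a) and (b) of Theorem~\ref{FHC} are verified for a suitable $\mathcal{D}$ and a norm-dense $\Phi$, part (ii) immediately yields $q$-frequent hypercyclicity on $(\mathcal{K}(H),\|.\|_{op})$ and $(\mathcal{L}(H),\text{COT})$.

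For the conjugate operator, let $D$ be the dense set furnished by the $q$-FHC Criterion for $R$ and take $\mathcal{D}=D$ together with a countable subset $\Phi\subseteq D$ that is dense in $H\cong X^*$. Given $(y,x^*)\in\mathcal{D}\times\Phi$, I would apply the criterion for $R$ once to $y$ and once to $x^*$ (legitimate precisely because $T^*=R$), obtaining sequences $(y_n)$ and $(x_n^*)$. Condition (b) of Theorem~\ref{FHC} is then read off directly from property (c) of the criterion. For condition (a), the first (triangular) series $\sum_{n=0}^{r}R^{r^q-(r-n)^q}(y)\otimes R^{r^q-(r-n)^q}(x^*)$ is handled by Lemma~\ref{lomma2} and the second (infinite) series $\sum_{n\geq1}y_{(n+r)^q-r^q}\otimes x^*_{(n+r)^q-r^q}$ by Lemma~\ref{lomma1}; in each case the first tensor factor converges unconditionally, uniformly in $r$, by the criterion, while the norm-boundedness of the second factor follows from the remark preceding Corollary~\ref{right}, which guarantees that a uniformly unconditionally convergent series has uniformly bounded tails.

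For $C_{R,U}$ I would again take $\mathcal{D}=D$, but now let $\Phi$ be any countable norm-dense subset of $X^*\cong H$. For $(y,x^*)$ the sequence $(y_n)$ comes from the criterion for $R$, whereas I set $x_n^*=U^n x^*$ (equivalently $(U^*)^{-n}x^*$). Since $T^*=U^*$, the identities $(U^*)^{n^q}x^*_{m^q}=x^*_{m^q-n^q}$ for $m>n$ and $(U^*)^{n^q}x^*_{n^q}=x^*$ are immediate, giving condition (b); and because $U$ is unitary we have $\|U^k x^*\|=\|x^*\|$ for all $k$, so the second tensor factors in both series of (a) are automatically bounded. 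Lemmas~\ref{lomma2} and~\ref{lomma1} then deliver condition (a) exactly as before, and Theorem~\ref{FHC}(ii) applies.

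The step I expect to require the most care is the verification of condition (a), namely the passage from uniform unconditional convergence of the vector series supplied by the criterion to uniform unconditional convergence of the corresponding tensor-product series in the operator norm. This is precisely what Lemmas~\ref{lomma1} and~\ref{lomma2} are designed to effect, so the real content lies in checking that their boundedness hypotheses on the second (``dual'') factors hold: for $C_R$ this rests on the tail-boundedness remark preceding Corollary~\ref{right}, while for $C_{R,U}$ it is supplied directly by the isometry property of $U$.
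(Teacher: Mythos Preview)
Your proposal is correct and follows essentially the same route as the paper. The only cosmetic difference is that the paper packages the argument by invoking Proposition~\ref{corollaryiso} (with $T=R^*$ and $\Phi=D$, and implicitly with $T=U$ for the second assertion), whereas you unfold that proposition and appeal directly to Theorem~\ref{FHC}(ii) together with Lemmas~\ref{lomma1} and~\ref{lomma2}; the verification of the boundedness hypotheses on the dual factors---via the tail-boundedness remark for $C_R$ and via the isometry of $U$ for $C_{R,U}$---is exactly the content underlying Proposition~\ref{corollaryiso}.
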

\begin{proof}
Since $H$ is a separable Hilbert space, it has the AP. As in the proof of the above results, one can find
a dense set $D$ of $H$ satisfying the conditions in Theorem
\ref{1.3}. Now, in Proposition \ref{corollaryiso} take
$T=R^*$ and $\Phi=D$. The result follows.
\end{proof}

We now proceed to some concrete applications of Theorem \ref{FHC}.
We provide sufficient conditions on the weights $(w_n)$ and
$(\mu_n)$ for the map $C_{B_w,F_\mu}$ to be $q$-frequently
hypercyclic on different Banach algebras of operators on $\ell^p$,
$1\leq p<\infty$, where the backward shift $B_w$ and the forward
shift $F_\mu$ are respectively given by $B_w(e_0)=0$,
$B_w(e_n)=w_ne_{n-1}$, $n\geq 1$ and $F_\mu (e_n)=\mu_{n+1}
e_{n+1}$, $n\geq 0$. Here $\{e_n\}_{n\geq 0}$ is the standard basis
in $\ell^p$.

\begin{prop}\label{unifhc}
If $\displaystyle \lim_{n\rightarrow
\infty}|w_1w_2..w_{(n+r)^q-r^q+i}\mu_1\mu_2..\mu_{(n+r)^q-r^q+j}|=\infty$, uniformly in $r\geq 0$, for all
$i,j\in \mathbb{N}_0$, then $C_{B_w,F_\mu}$ is $q$-frequently
hypercyclic on $(\mathcal{L}(\ell^1),\emph{SOT})$,
$(\mathcal{K}(\ell^p),\|.\|_{op})$ and
$(\mathcal{L}(\ell^p),\emph{COT})$, where $1<p<\infty$.
\end{prop}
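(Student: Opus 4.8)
The plan is to realize this as an application of Theorem~\ref{FHC} with $Y=X=\ell^p$, $R=B_w$ and $T=F_\mu$ (throughout one assumes the weights are bounded, so that $B_w,F_\mu\in\mathcal{L}(\ell^p)$). First I would identify the adjoint $T^*=F_\mu^*$: a direct computation on the standard basis gives $F_\mu^*e_m^*=\mu_m e_{m-1}^*$ and $F_\mu^*e_0^*=0$, so $T^*$ is again a weighted backward shift, now with weights $(\mu_m)$, acting on the dual. For the dense sets I would take $\mathcal{D}=\Phi=c_{00}$, the finitely supported vectors; this is norm-dense in $\ell^p$, norm-dense in $\ell^{p'}$ when $1<p<\infty$, and weak$^*$-dense in $\ell^\infty=(\ell^1)^*$. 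Since $T^*$ maps $c_{00}$ into itself, the hypotheses $T^*(\Phi)\subseteq\Phi$ and $\{x_n^*\}\subseteq\Phi$ of part (i) hold automatically.

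Next I would exhibit the sequences required by Theorem~\ref{FHC}. It suffices to treat a basis pair $y=e_i$, $x^*=e_j^*$ and then extend to finite combinations by linearity. Here I set
\[
y_k=\frac{e_{i+k}}{w_{i+1}\cdots w_{i+k}},\qquad x_k^*=\frac{e_{j+k}^*}{\mu_{j+1}\cdots\mu_{j+k}}\quad(k\geq 1),
\]
with $y_0=e_i$ and $x_0^*=e_j^*$. Condition (b) is then a telescoping identity: applying $B_w^{n^q}$ to $e_{i+m^q}$ produces the weight product $w_{i+m^q}\cdots w_{i+m^q-n^q+1}$, which cancels the top $n^q$ factors in the denominator of $y_{m^q}$ and leaves exactly $y_{m^q-n^q}$ (and returns $y$ when $m=n$); the computation for $T^*$ is identical.

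The substantive point is condition (a), namely uniform-in-$r$ unconditional convergence of the two series. For the first series I would use that $r^q-(r-n)^q\geq n^q$ for $0\leq n\leq r$ (superadditivity of $t\mapsto t^q$ for $q\geq 1$), so that $B_w^{r^q-(r-n)^q}(e_i)=0$ as soon as $n>\min(i,j)$; hence, uniformly in $r$, the series reduces to a \emph{finite} sum with at most $\min(i,j)+1$ nonzero terms and convergence is immediate. The second series carries the hypothesis. Writing $k_n=(n+r)^q-r^q$, its $n$-th term is $c_n\,e_{i+k_n}\otimes e_{j+k_n}^*$ with
\[
|c_n|=\frac{1}{|w_{i+1}\cdots w_{i+k_n}|\,|\mu_{j+1}\cdots\mu_{j+k_n}|}=\frac{|w_1\cdots w_i|\,|\mu_1\cdots\mu_j|}{|w_1\cdots w_{k_n+i}|\,|\mu_1\cdots\mu_{k_n+j}|}.
\]
Since $k_n\geq n^q\geq n$, we have $k_n\to\infty$ uniformly in $r$, and the hypothesis forces $|c_n|\to 0$ uniformly in $r$. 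Because the indices $i+k_n$ are pairwise distinct and the indices $j+k_n$ are pairwise distinct, the rank-one operators $e_{i+k_n}\otimes e_{j+k_n}^*$ have disjoint ranges and disjoint ``rows'', whence applying $\sum_{n\in G}c_n\,e_{i+k_n}\otimes e_{j+k_n}^*$ to $x$ and using disjointness in the $\ell^p$-norm ($1\leq p<\infty$) gives the clean estimate $\|\sum_{n\in G}c_n\,e_{i+k_n}\otimes e_{j+k_n}^*\|_{op}\leq\sup_{n\in G}|c_n|$, which tends to $0$ as $\min G\to\infty$ uniformly in $r$.

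Finally I would pass from basis pairs to arbitrary $y,x^*\in c_{00}$ by expanding into the corresponding finite double sums and applying the triangle inequality, and then invoke Theorem~\ref{FHC}: part (i) gives $q$-frequent hypercyclicity on $(\mathcal{L}(\ell^1),\text{SOT})$, while part (ii)---valid since $\ell^p$ has the AP and $\Phi$ is norm-dense in $\ell^{p'}$ for $1<p<\infty$---gives it on $(\mathcal{K}(\ell^p),\|.\|_{op})$ and $(\mathcal{L}(\ell^p),\text{COT})$. I expect the main obstacle to be the uniform-in-$r$ control of the second series: one must recognize that only the \emph{combined} product of weights is assumed to grow, so the $Y$- and $X^*$-factors cannot be separated (ruling out a direct split through Lemmas~\ref{lomma1}--\ref{lomma2}), and instead exploit the disjoint-support structure of the diagonal rank-one operators to reduce the operator norm to $\sup_{n\in G}|c_n|$.
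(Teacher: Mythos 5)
Your proposal is correct and follows essentially the same route as the paper: the authors likewise apply Theorem~\ref{FHC} with $X=Y=\ell^p$ and the right inverses $S_w(e_n)=\tfrac{1}{w_{n+1}}e_{n+1}$, $J_\mu(e_n^*)=\tfrac{1}{\mu_{n+1}}e_{n+1}^*$, dispose of the first series as a finite sum because $r^q-(r-n)^q\geq n$ annihilates $B_w^{r^q-(r-n)^q}(e_i)$ for large $n$, and prove the uniform unconditional convergence of the second series by exactly your direct $\ell^p$-computation $\bigl\|\sum_{n\in G}c_n\,e_{i+k_n}\otimes e^*_{j+k_n}\bigr\|_{op}\leq\sup_{n\in G}|c_n|$ rather than through Lemmas~\ref{lomma1}--\ref{lomma2}. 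The one place you are looser than the paper is the choice $\Phi=c_{00}$, which is not countable as Theorem~\ref{FHC} requires; the paper instead starts from the rational span $\Phi_0$ of $\{e_n^*\}$ and enlarges it to $\Phi=\bigcup_{j\geq 0}\Phi_j$ with $\Phi_{j+1}=\bigcup_{n,k\geq 0}(F_\mu^*)^nJ_\mu^k(\Phi_j)$, which is countable, weak$^*$-dense (and norm-dense in $\ell^{p'}$), and closed under $F_\mu^*$ and $J_\mu$, so that $T^*(\Phi)\subseteq\Phi$ and $\{x_n^*\}\subseteq\Phi$ hold literally. This is a one-line repair, not a substantive gap.
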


\begin{proof}
In Theorem \ref{FHC}, let $X=Y=\ell^p$. To prove the result for
$(\mathcal{L}(\ell^1),\text{SOT})$, let us write $\Phi_0$ for the
linear span of $\{e^*_n:n\geq0\}$ over rationals in $\ell^\infty$
and $\mathcal{D}$ for $\text{span}\{e_n:n\geq0\}$ in $\ell^1$.
Consider the maps $S_w$ and $J_\mu$ given by $S_w(e_n)=\frac {1} {w_{n+1}}e_{n+1}$ and
$J_\mu(e^*_n)=\frac {1} {\mu_{n+1}} e^*_{n+1}$, $n\geq0$. Note that $B_wS_w$ and $F_\mu^*J_\mu$ are identity operators, and
\begin{equation}\label{plus}
\displaystyle S_w^m(e_n)=\frac {1}
{w_{n+1}w_{n+2}...w_{n+m}}e_{n+m}~~\text{and}~~\displaystyle
J_\mu^m(e^*_n)=\frac {1} {\mu_{n+1}\mu_{n+2}...\mu_{n+m}} e^*_{n+m}.
\end{equation}
Put
\begin{center}
$\displaystyle \Phi= \bigcup_{j\geq 0}\Phi_j$, where $\displaystyle
\Phi_{j+1}=\bigcup_{n,k\geq 0}(F_\mu^*)^nJ_\mu^k(\Phi_j)$, $j\geq
0$.
\end{center}
Then the set $\Phi$ is weak$^*$-dense in $\ell^\infty$ as
$\Phi_0\subseteq \Phi$. Clearly $\Phi$ is countable. Further
$F_\mu^*(\Phi)\subseteq \Phi$ and $J_\mu(\Phi)\subseteq \Phi$.

We only consider the series
\begin{center}
$\displaystyle \sum_{n=1}^{r} B^{r^q-(r-n)^q}_w(e_i)\otimes
{(F^*_\mu)}^{r^q-(r-n)^q}(e_j^*)$ and $\displaystyle\sum_{n=1}^{\infty}
S^{(n+r)^q-r^q}_w(e_i)\otimes J^{(n+r)^q-r^q}_\mu(e_j^*)$
\end{center}
for $i,j\geq 0$. As $B_w^n(e_i)=0$ for sufficiently large $n$ and $r^q-(r-n)^q\geq n$, the
first series converges unconditionally in the operator norm, uniformly in $r\geq 0$. For the latter series, it suffices to prove that $\displaystyle \sum_{n\geq 1} a_{n,r} e_{(n+r)^q-r^q+i}\otimes e^*_{(n+r)^q-r^q+j}$ converges unconditionally, uniformly in the operator norm
if $\lim_{n\rightarrow \infty} |a_{n,r}|=0$, uniformly in $r$. But this is immediate as, for $x=(x_n)\in \ell^p$, $1\leq p<\infty$, we have
\begin{center}
$\displaystyle \|\sum_{n\in F} a_{n,r} e_{(n+r)^q-r^q+i}\otimes
e_{(n+r)^q-r^q+j}^*\|=\left \|\sum_{n\in F} a_{n,r}
x_{(n+r)^q-r^q+j} e_{(n+r)^q-r^q+i} \right\|= \big (\sum_{n\in F} |a_{n,r}
x_{(n+r)^q-r^q+j}|^p \big)^{1/p}\leq \max_{n\in F}|a_{n,r}| \|x\|$.
\end{center}
Therefore $C_{B_w,F_\mu}$ is $q$-frequently hypercyclic on
$(\mathcal{L}(\ell^1),SOT)$ by Theorem \ref{FHC}(i).

Since $\ell^p$ has the AP and the set $\Phi$ constructed above is
norm-dense in $\ell^{p^\prime}$, where $1<p<\infty$,
$1/p+1/p^\prime=1$ and $\ell^{p^\prime}$ is the dual of $\ell^p$,
the operator $C_{B_w,F_\mu}$ is $q$-frequently hypercyclic on
$(\mathcal{K}(\ell^p),\|.\|_{op})$ and $(\mathcal{L}(\ell^p),COT)$
by Theorem \ref{FHC}(ii).
\end{proof}

\begin{xrem}
It is evident from the proof of the above proposition that the
result holds for any Banach sequence space $E$ with the AP such that
span $\{e_n:n\geq0\}$, span $\{e^*_n\}$ over rationals are
norm-dense in $E$, $E^*$ respectively and $\displaystyle
\sum_n(w_1w_2..w_{i+(n+r)^q-r^q}\mu_1\mu_2..\mu_{j+(n+r)^q-r^q})^{-1}e_{i+(n+r)^q-r^q}\otimes
e^*_{j+(n+r)^q-r^q}$ converges unconditionally in the operator norm, uniformly in $r$ for all
$i,j\geq 0$.
\end{xrem}




Our next aim is to obtain the bilateral version of Proposition
\ref{unifhc}. For $a=(a_n)_{n\in\mathbb{Z}}$ of a bounded sequence
of nonzero scalars, we define the bilateral backward shift $T_a$ on
the sequence space $\ell^p(\mathbb{Z})$, $1\leq p<\infty$, as
$T_a(e_n)=a_ne_{n-1}$ and the forward shift $S_a$ as
$S_a(e_n)=a_ne_{n+1}$, $n\in \mathbb{Z}$, where $\{e_n\}_{n\in
\mathbb{Z}}$ is the standard basis in $\ell^p(\mathbb{Z})$. Then we
have

\begin{prop}\label{bifhc}
Suppose that, for all $i,j\in \mathbb{Z}$, $\displaystyle \lim_{n\rightarrow
\infty}|a_1a_2..a_{(n+r)^q-r^q+i}b_1b_2..b_{(n+r)^q-r^q+j}|=\infty~~ and~~ \lim_{n\rightarrow \infty}
|a_ia_{i-1}..a_{i-r^q+(r-n)^q+1}b_jb_{j-1}..b_{j-r^q+(r-n)^q+1}|=0$, uniformly in ~$r\in \mathbb{N}_0$.
Then $C_{T_a,S_b}$ is $q$-frequently hypercyclic
on $(\mathcal{L}(\ell^1(\mathbb{Z})),SOT)$, $(\mathcal{K}(\ell^p(\mathbb{Z}),\|.\|_{op}))$ and
$(\mathcal{L}(\ell^p(\mathbb{Z})),COT)$, $1<p<\infty$.
\end{prop}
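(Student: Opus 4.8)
The plan is to invoke Theorem~\ref{FHC} with $X=Y=\ell^p(\mathbb{Z})$ and $(R,T)=(T_a,S_b)$, following the template of Proposition~\ref{unifhc} but with one essential change: for a bilateral backward shift $T_a^m e_i$ never vanishes, so the first series in hypothesis~(a) of Theorem~\ref{FHC} is genuinely infinite and its convergence must be extracted from the decay hypothesis rather than from finiteness of support. First I would fix the natural right inverses. Since $T_a e_n=a_n e_{n-1}$ and $S_b^*e_n^*=b_{n-1}e_{n-1}^*$, set $S_a'(e_n)=a_{n+1}^{-1}e_{n+1}$ and $J_b'(e_n^*)=b_n^{-1}e_{n+1}^*$, so that $T_aS_a'=I$ and $S_b^*J_b'=I$. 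For a basis pair $(e_i,e_j^*)$ I then take $y_m=(S_a')^m e_i$ and $x_m^*=(J_b')^m e_j^*$; the relations $T_aS_a'=I$ and $S_b^*J_b'=I$ give condition~(b) of Theorem~\ref{FHC} at once, namely $T_a^{n^q}y_{m^q}=y_{m^q-n^q}$ and $(S_b^*)^{n^q}x_{m^q}^*=x_{m^q-n^q}^*$ for $m>n$, equal to $e_i,e_j^*$ when $m=n$.

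Next I would build the admissible set $\Phi$ exactly as in Proposition~\ref{unifhc}: starting from the rational span $\Phi_0$ of $\{e_n^*\}$, put $\Phi_{k+1}=\bigcup_{n,\ell\ge 0}(S_b^*)^n(J_b')^\ell(\Phi_k)$ and $\Phi=\bigcup_k\Phi_k$. Then $\Phi$ is countable, contains $\Phi_0$ (hence is weak$^*$-dense in $\ell^\infty$ and norm-dense in $\ell^{p'}$ for $1<p<\infty$), satisfies $S_b^*(\Phi)\subseteq\Phi$, and contains every $x_m^*=(J_b')^m e_j^*$, as required by Theorem~\ref{FHC}(i). For $\mathcal{D}$ I take the span of $\{e_n\}$ in $\ell^p(\mathbb{Z})$.

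The heart of the matter is verifying hypothesis~(a). Iterating the shifts gives, with $m=r^q-(r-n)^q$ in the first series and $m=(n+r)^q-r^q$ in the second, rank-one terms supported on single basis vectors $e_{i\mp m}\otimes e_{j\mp m}^*$ with coefficients $\prod_{t=0}^{m-1}a_{i-t}\cdot\prod_{t=1}^{m}b_{j-t}$ and $\big(\prod_{t=1}^{m}a_{i+t}\cdot\prod_{t=0}^{m-1}b_{j+t}\big)^{-1}$ respectively. Because distinct $n$ produce disjoint coordinate supports, for any finite $F$ one has the clean bound
\[
\Big\|\sum_{n\in F} c_n\, e_{\alpha_n}\otimes e_{\beta_n}^*\Big\|_{op}\le \max_{n\in F}|c_n|,
\]
valid in every $\ell^p(\mathbb{Z})$, $1\le p<\infty$, which reduces uniform unconditional convergence to showing $\sup_r|c_n(r)|\to 0$ as $n\to\infty$. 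Since $t\mapsto t^q$ is superadditive on $[0,\infty)$ for $q\ge 1$, both exponents satisfy $m\ge n^q\ge n$, so $m\to\infty$ with $n$ uniformly in $r$; the first family of coefficients then tends to $0$ uniformly in $r$ by the decay hypothesis and the second family by the growth hypothesis, after replacing $j$ by $j-1$ to align the $b$-indices (legitimate since both hypotheses are assumed for all $i,j\in\mathbb{Z}$). This is the step I expect to be the main obstacle, precisely because it is where the bilateral character forces the use of the two limit conditions in place of the finite-support shortcut available in the unilateral case, and where the uniformity in $r$ must be tracked carefully against the constraint $n\le r$.

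With (a) and (b) in hand, Theorem~\ref{FHC}(i) yields $q$-frequent hypercyclicity of $C_{T_a,S_b}$ on $(\mathcal{L}(\ell^1(\mathbb{Z})),\text{SOT})$. For $1<p<\infty$ one notes that $\ell^p(\mathbb{Z})$ has the AP and that $\Phi$ is norm-dense in the dual $\ell^{p'}(\mathbb{Z})$, so Theorem~\ref{FHC}(ii) gives $q$-frequent hypercyclicity on $(\mathcal{K}(\ell^p(\mathbb{Z})),\|\cdot\|_{op})$ and $(\mathcal{L}(\ell^p(\mathbb{Z})),\text{COT})$, completing the proof.
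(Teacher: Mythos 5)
Your proposal is correct and follows essentially the same route as the paper: apply Theorem~\ref{FHC} with the right inverses $S$, $J$ of $T_a$, $S_b^*$, the iteratively defined countable set $\Phi$, and the disjoint-support estimate $\|\sum_{n\in F}c_n\,e_{\alpha_n}\otimes e_{\beta_n}^*\|_{op}\le\max_{n\in F}|c_n|$ already used in Proposition~\ref{unifhc}. You are merely more explicit than the paper about the bilateral coefficient computation (including the harmless index shift in the $b$'s) and about why $r^q-(r-n)^q\ge n^q$ forces the exponents to infinity uniformly in $r$.
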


\begin{proof}
We apply Theorem \ref{FHC}. Choose
$X=Y=\ell^p(\mathbb{Z})$, $\mathcal{D}$=
span$\{e_n:n\in\mathbb{Z}\}$ and $\Phi_0=$
span$\{e^*_n:n\in\mathbb{Z}\}$ over rationals. Define the maps $S$
and $J$ as $S(e_n)=\frac {1} {a_{n+1}} e_{n+1}$ and $J(e_n^*)=\frac {1}
{b_{n+1}} e^*_{n+1}$ for $n\in \mathbb{Z}$. Let
\begin{center}
$\displaystyle \Phi= \bigcup_{j\geq 0}\Phi_j$, where $\displaystyle
\Phi_{j+1}=\bigcup_{n,k\geq 0}{S_b^*}^nJ^k(\Phi_j)$, $j\geq 0$.
\end{center}
The set $\mathcal{D}$ is norm-dense in $\ell^p(\mathbb{Z})$ for $p\in
[1,\infty)$, $\Phi$ is weak$^*$-dense in $\ell^\infty(\mathbb{Z})$ and norm-dense in
$\ell^{p^\prime}(\mathbb{Z})$, where $1/p+1/{p^\prime}=1$ and $1<
p<\infty$. Moreover $T_aS$ is the identity operator on $\mathcal{D}$
and $S_b^*J$ is the identity on $\Phi$. As in Proposition
\ref{unifhc}, $J(\Phi)\subseteq \Phi$ and $S_b^*(\Phi)\subseteq
\Phi$. Further for $n\geq 1$ and $i,j\in \mathbb{Z}$, we have
\begin{center}
~$T_a^n(e_i)=a_ia_{i-1}..a_{i-n+1}e_{i-n}$,~
$(S_b^*)^n(e_j^*)=b_jb_{j-1}..b_{j-n+1}e^*_{j-n}$,
\end{center}
\begin{center}
$S^n(e_i)=\frac {1} {a_{i+1}a_{i+2}..a_{i+n}} e_{i+n}$~ and~
$J^n(e_j^*)=\frac {1} {b_{j+1}b_{j+2}..b_{j+n}}e_{j+n}^*$.
\end{center}
Let $S_n=S^n$ and $J_n=J^n$. Proceeding as in the proof of
Proposition \ref{unifhc}, one can show that the series $\sum_{n\leq
r} T_a^{r^q-(r-n)^q}(e_i)\otimes (S_b^*)^{r^q-(r-n)^q}(e_j^*)$ and $\sum_{n\geq 1}
S_{(n+r)^q-r^q}(e_i)\otimes J_{(n+r)^q-r^q}(e_j^*)$ are unconditionally convergent
in the operator norm, uniformly in $r\geq 0$, by the hypothesis. This proves part (1).

Also since $\ell^p(\mathbb{Z})$ has the AP, and the above set $\Phi$
is norm-dense in $\ell^{p^\prime}(\mathbb{Z})$ ($1<p<\infty$ and
$\frac {1} {p}+\frac {1} {p^\prime}=1$), Theorem \ref{FHC}(ii)
yields (2).
\end{proof}


For the next result, let $\mathbb{C}^N$ be considered as a vector
space over $\mathbb{C}$, and for
$\lambda=(\lambda_1,...,\lambda_N)\in \mathbb{C}^N$, let $D_\mu$ be
the diagonal operator $D_\lambda(f_j)=\lambda_j f_j$ on
$\mathbb{C}^N$ with respect to the standard basis
$\{f_1,f_2,..,f_N\}$ of $\mathbb{C}^N$, where $N\in\mathbb{N}$. Then
we have

\begin{prop}Suppose that $|\lambda_j|\geq 1$ for each $1\leq j\leq N$.\\
$(1)$ If $\displaystyle \sum_{n\geq 1} \frac{1}
{|\mu_1\mu_2...\mu_{(n+r)^q-r^q+i}|}<\infty$ uniformly in $r$, for each $i\geq 0$, then
$C_{D_\lambda,F_\mu}$ is $q$-frequently
hypercyclic on $(\mathcal{L}(\ell^1,\mathbb{C}^N),\text{SOT})$.\\
$(2)$ If $1<p<\infty$ and $\displaystyle \sum_{n\geq 1} \frac {1}
{|\mu_1\mu_2...\mu_{(n+r)^q-r^q+i}|^p}<\infty$ uniformly in $r$, for all $i\geq 0$, then
$C_{D_\lambda,F_\mu}$ is $q$-frequently hypercyclic on
$(\mathcal{K}(\ell^p,\mathbb{C}^N),\|.\|_{op})$ and
$(\mathcal{L}(\ell^p,\mathbb{C}^N),COT)$.
\end{prop}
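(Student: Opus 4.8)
The plan is to derive this as an application of Theorem \ref{FHC}, closely following the scheme of Proposition \ref{unifhc}, now with the finite dimensional space $Y=\mathbb{C}^N$ in the role of the target. Concretely, I would set $X=\ell^p$, $T=F_\mu$, $Y=\mathbb{C}^N$ and $R=D_\lambda$, so that $C_{R,T}=C_{D_\lambda,F_\mu}$ sends $S\colon\ell^p\to\mathbb{C}^N$ to $D_\lambda S F_\mu$. For the dense/total sets I would take $\mathcal{D}$ to be a countable dense subset of $\mathbb{C}^N$ (e.g. the rational-coefficient span of the $f_i$) and, exactly as in Proposition \ref{unifhc}, build $\Phi=\bigcup_{j\ge0}\Phi_j$ from $\Phi_0=\operatorname{span}_{\mathbb{Q}}\{e_n^*\}$ by closing under $F_\mu^*$ and the reciprocal-weight up-shift $J_\mu$, where $F_\mu^*e_m^*=\mu_m e_{m-1}^*$ is the weighted backward shift on the dual. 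This $\Phi$ is weak$^*$-dense in $\ell^\infty$ (for part (1)) and norm-dense in $\ell^{p'}$ (for part (2)), is countable, and satisfies $F_\mu^*(\Phi)\subseteq\Phi$ and $J_\mu(\Phi)\subseteq\Phi$ by construction.

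Next I would produce the sequences required in the hypotheses of Theorem \ref{FHC}. For $y=f_i$ (and then general $y\in\mathcal{D}$ by linearity) I set $y_k=D_\lambda^{-k}y$; this is legitimate because $|\lambda_j|\ge1$ forces every $\lambda_j\ne0$, so $D_\lambda$ is invertible with $\|D_\lambda^{-k}\|\le1$, whence $(y_k)$ is norm-bounded. For $x^*=e_j^*$ I set $x_k^*=J_\mu^k e_j^*=\bigl(\mu_{j+1}\cdots\mu_{j+k}\bigr)^{-1}e_{j+k}^*$. The cocycle identities in part (b) of Theorem \ref{FHC} — namely $R^{n^q}y_{n^q}=y$, $(F_\mu^*)^{n^q}x_{n^q}^*=x^*$, and the shifted versions for $m>n$ — then follow at once from $D_\lambda^{-1}$ being a right inverse of $D_\lambda$ and $J_\mu$ being a right inverse of $F_\mu^*$; moreover $\{x_n^*\}\subseteq\Phi$ as is needed for Theorem \ref{FHC}(i).

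It remains to check part (a), the unconditional convergence uniform in $r$ of the two series. The first series $\sum_{n=0}^{r}R^{r^q-(r-n)^q}(y)\otimes(F_\mu^*)^{r^q-(r-n)^q}(x^*)$ presents no difficulty: each element of $\Phi$ is finitely supported, so $(F_\mu^*)^m x^*=0$ once $m$ exceeds its largest support index, and since $r^q-(r-n)^q\ge n$ this kills all but a uniformly bounded number of terms, the surviving $D_\lambda$-factors staying bounded on that finite range. For the second series, writing $k_n=(n+r)^q-r^q$, the summand reduces to $c_n\,f_i\otimes e_{j+k_n}^*$ with $c_n=\lambda_i^{-k_n}\bigl(\mu_{j+1}\cdots\mu_{j+k_n}\bigr)^{-1}$. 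Here I would invoke $\|f_i\otimes g\|_{op}=\|f_i\|\,\|g\|_{X^*}$: because the range $\mathbb{C}^N$ is finite dimensional all these rank-one summands collapse onto the fixed direction $f_i$, so a finite partial sum equals $f_i\otimes\bigl(\sum_{n\in F}c_n e_{j+k_n}^*\bigr)$ and its operator norm is $\|f_i\|$ times the dual-space norm of the disjointly supported coefficient sequence $(c_n)_{n\in F}$. Since $|\lambda_i^{-k_n}|\le1$, the stated summability of $\sum_n \bigl|\mu_1\cdots\mu_{(n+r)^q-r^q+i}\bigr|^{-1}$ (resp. of its $p$-th power), uniform in $r$, forces the tails of this coefficient sequence to vanish uniformly in $r$, giving the desired unconditional convergence. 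With (a) and (b) verified, Theorem \ref{FHC}(i) yields part (1); for part (2) the finite dimensional $\mathbb{C}^N$ trivially has the approximation property and $\Phi$ is norm-dense in $\ell^{p'}$, so Theorem \ref{FHC}(ii) yields $q$-frequent hypercyclicity on $(\mathcal{K}(\ell^p,\mathbb{C}^N),\|.\|_{op})$ and $(\mathcal{L}(\ell^p,\mathbb{C}^N),\mathrm{COT})$.

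The main obstacle I anticipate is precisely this operator-norm estimate for the second series. Unlike Proposition \ref{unifhc}, where both the domain and range indices vary and the images land in distinct coordinates of $\ell^p$ — so that mere uniform decay of the coefficients suffices — here the finite dimensionality of the range concentrates every summand on $f_i$; consequently the relevant quantity is the \emph{dual} norm of the coefficient sequence, and one must be careful that the summability hypothesis is matched to the correct exponent and is genuinely uniform in $r$. Verifying that the set $\Phi$ retains weak$^*$- (resp. norm-) density after being closed under $F_\mu^*$ and $J_\mu$, while every member stays finitely supported so that the first series truncates, is the other point that requires care.
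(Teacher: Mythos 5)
Your proposal follows essentially the same route as the paper: the same choices of $X=\ell^p$, $Y=\mathbb{C}^N$, $\mathcal{D}$ and $\Phi$, the same right inverses $S_\lambda^k$ (i.e.\ $D_\lambda^{-k}$) and $J_\mu^k$, the same truncation argument for the first series, and the same reduction of the second series to boundedness of $\{\lambda_j^{-k}f_j\}$ together with unconditional convergence of the disjointly supported coefficient functionals --- which the paper delegates to Lemma \ref{lomma1} and you verify by the equivalent direct rank-one norm computation. The exponent-matching point you flag in part $(2)$ is genuine but is equally glossed over in the paper: the hypothesis yields $\ell^p$-summability of the coefficients $(\mu_{j+1}\cdots\mu_{j+k_n})^{-1}$ while the relevant dual norm is the $\ell^{p'}$-norm, and the inclusion $\ell^p\subseteq\ell^{p'}$ closes this gap only for $1<p\leq 2$.
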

\begin{proof}
In Theorem \ref{FHC}, take $X=\ell^p$ and $Y=\mathbb{C}^N$. Consider
the set $\Phi$ and the maps $J_{n}$ as in the proof of Proposition
\ref{unifhc}. Choose $D=\mathbb{C}^N$ and $S_{n}=S_\lambda^{n}$,
where $S_\lambda (f_j)= \frac {1} {\lambda_j} f_j$. For a fixed
$i\geq 0$, the series $\sum_{n\leq r} D_\lambda^{r^q-(r-n)^q} (f_j)\otimes (F_\mu^*)^{r^q-(r-n)^q}(e_i)$ is clearly
unconditionally convergent in the operator norm, uniformly in $r$. Since
$S_\lambda ^n(f_j)= {\lambda_j^{-n}} f_j$ and $|\lambda _j|\geq 1$,
the set $\{S_{n}(f_j):n=0,1,2,...\}$ becomes bounded.
Consequently, the series $\sum_{n\geq 1} S_{(n+r)^q-r^q}(f_j) \otimes J_{(n+r)^q-r^q}(e_i)$
converges unconditionally, uniformly in $r$ by Lemma \ref{lomma1}.
\end{proof}

So far, we have considered applications of Theorem \ref{FHC} to
maps on Banach algebras of operators. Now we turn to the $q$-frequent
hypercyclicity of the conjugate operator $C_R(S)=RSR^*$ defined on the real
subspace $\mathcal{S}(H)$ of $\mathcal{L}(H)$, consisting of all
self-adjoint operators on a separable Hilbert space $H$. H. Petersson \cite{H} showed that if $R$ satisfies the Hypercyclicity Criterion, then $C_R$ is hypercyclic on the norm-closure of span $\{h\otimes h:h\in H\}$ over $\mathbb{R}$, and hence COT-hypercyclic on $\mathcal{S}(H)$. A standard application of
the $q$-FHC criterion, using Lemmas \ref{lomma1} and \ref{lomma2} yield the following:
\begin{prop} \label{FHCselfadjoint}
If $R\in \mathcal{L}(H)$ satisfies the $q$-FHC Criterion, then $C_R$ is $q$-frequently hypercyclic on
$(\mathcal{S}(H),\emph{COT})$.
\end{prop}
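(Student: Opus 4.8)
The plan is to follow Petersson's strategy and reduce everything to the separable real Banach space $\mathcal{S}_0=\overline{\text{span}}^{\|.\|_{op}}\{h\otimes h:h\in H\}$, where $h\otimes h$ denotes the rank-one self-adjoint operator $x\mapsto \langle x,h\rangle h$. A direct computation gives $C_R(h\otimes h)=(Rh)\otimes(Rh)$, so $\mathcal{S}_0$ is invariant under $C_R$; moreover every finite-rank self-adjoint operator lies in $\mathcal{S}_0$ (by the spectral theorem, writing it as a real combination of terms $(\sqrt{|\lambda|}\,e)\otimes(\sqrt{|\lambda|}\,e)$ with $e$ a unit eigenvector), and these are COT-dense in $\mathcal{S}(H)$. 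Hence, once I establish that $C_R$ is $q$-frequently hypercyclic on $(\mathcal{S}_0,\|.\|_{op})$, a $q$-frequently hypercyclic vector there will remain one for $(\mathcal{S}(H),\text{COT})$: since COT is coarser than the norm topology and $\mathcal{S}_0$ is COT-dense, every nonempty COT-open $V\subseteq\mathcal{S}(H)$ meets $\mathcal{S}_0$ in a nonempty set that is norm-open in $\mathcal{S}_0$, so the visit set to $V$ contains the visit set to this norm-open piece, which has positive $q$-lower density.

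It remains to verify the $q$-FHC Criterion (Theorem \ref{1.3}) for $C_R$ on $\mathcal{S}_0$. As a countable dense set I would take the rational linear combinations of $\{h\otimes h:h\in D_0\}$, where $D_0$ is a countable dense subset of the dense set $D\subseteq H$ furnished by the $q$-FHC Criterion for $R$; thus each $h\in D_0$ carries a sequence $(h_n)$ with $h_0=h$ satisfying (a)--(c) of Theorem \ref{1.3} for $R$. To a generator $h\otimes h$ I associate the sequence $H_n=h_n\otimes h_n$, extended additively to finite combinations. Condition (c) is then immediate and purely algebraic: $C_R^{n^q}(h_m\otimes h_m)=(R^{n^q}h_m)\otimes(R^{n^q}h_m)$ equals $h\otimes h$ when $m=n$ and equals $h_{m^q-n^q}\otimes h_{m^q-n^q}$ when $m>n$, by property (c) for $R$.

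The analytic content is conditions (a) and (b), and here I would invoke Lemmas \ref{lomma1} and \ref{lomma2}. For (a), note $C_R^{r^q-(r-n)^q}(h\otimes h)=u_{n,r}\otimes u_{n,r}$ with $u_{n,r}=R^{r^q-(r-n)^q}h$; since $\sum_{n=0}^{r}u_{n,r}$ converges unconditionally and uniformly in $r$ by property (a) for $R$, and since the tail $\{u_{n,r}:n\geq N_0\}$ is bounded (by the observation preceding Corollary \ref{right}), Lemma \ref{lomma2} (with the Riesz identification $u^*=\langle\cdot,u\rangle$) gives the required uniform unconditional convergence of $\sum_{n\leq r}u_{n,r}\otimes u_{n,r}$ in the operator norm. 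For (b), with $v_{n,r}=h_{(n+r)^q-r^q}$ property (b) for $R$ says $\sum_{n\geq0}v_{n,r}$ converges unconditionally and uniformly in $r$, its tail is again bounded, and Lemma \ref{lomma1} delivers the uniform unconditional convergence of $\sum_{n\geq0}v_{n,r}\otimes v_{n,r}$. Finite real combinations are handled by summing finitely many such series, which preserves uniform unconditional convergence, so $C_R$ satisfies all hypotheses of Theorem \ref{1.3} on $\mathcal{S}_0$.

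I expect the main obstacle to be conceptual rather than computational: recognizing that the quadratic, self-adjoint building blocks $h\otimes h$ convert the scalar data of the $q$-FHC Criterion for $R$ into rank-one-operator data to which Lemmas \ref{lomma1} and \ref{lomma2} apply, and in particular checking that only a \emph{tail} of the dual factors need be bounded (the powers $R^{r^q-(r-n)^q}h$ may blow up for the finitely many small $n$ as $r\to\infty$, but this is harmless). The only other point requiring care is the density transfer: one must confirm that $\mathcal{S}_0$ genuinely contains all finite-rank self-adjoint operators and is COT-dense in $\mathcal{S}(H)$, so that norm-$q$-frequent hypercyclicity on $\mathcal{S}_0$ upgrades to COT-$q$-frequent hypercyclicity on the full space.
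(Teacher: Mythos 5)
Your proposal is correct and follows exactly the route the paper intends: the paper's "proof" is the one-line remark that the result is a standard application of the $q$-FHC Criterion via Lemmas \ref{lomma1} and \ref{lomma2}, after Petersson's reduction to the norm-closure of $\operatorname{span}_{\mathbb{R}}\{h\otimes h\}$. You have simply written out the details (the identity $C_R(h\otimes h)=(Rh)\otimes(Rh)$, the verification of (a)--(c), and the comparison-principle transfer to COT) that the authors leave implicit.
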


\section{$q$-Frequent Hypercyclicity in $S_p(H)$}

In this section we provide a sufficient condition for $C_{R,T}$ to be $q$-frequently hypercyclic on $S_p(H)$ for a separable Hilbert space $H$. Let us begin
with the $q$-frequent hypercyclicity of the left multiplication operator $\mathfrak{L}_R$ on $S_p(H)$, $1\leq p<\infty$, which is an easy application of the $q$-FHC Criterion.

\begin{prop} \label{left-schatten}
Suppose that $R$ is an operator satisfying the $q$-Frequent
Hypercyclicity Criterion in a Banach space $X$ with separable dual.\\
$(1)$ If $(\mathcal{I}(X),\|.\|_\mathcal{I})$ is a separable Banach
ideal in $\mathcal{L}(X)$ such that the finite rank operators on $X$
are $\|.\|_\mathcal{I}$-dense in $\mathcal{I}(X)$, and $\|x\otimes
x^*\|_\mathcal{I}=\|x\|\|x^*\|$ for all $x\in X$ and $x^*\in X^*$,
then the left multiplication operator $\mathfrak{L}_R$ is
$q$-frequently
hypercyclic on $\mathcal{I}(X)$.\\
$(2)$ If $X$ is a separable Hilbert space, then $\mathfrak{L}_R$ is
frequently hypercyclic on $S_p(X)$, $1\leq p<\infty$.
\end{prop}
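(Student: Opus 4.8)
The plan is to apply the $q$-FHC Criterion (Theorem \ref{1.3}) to the operator $T=\mathfrak{L}_R$ on the separable Banach space $\mathcal{I}(X)$, transferring the data supplied by the $q$-FHC Criterion for $R$ on $X$ through the rank-one embeddings $y\mapsto y\otimes x^*$. Since $X^*$ is separable, I first fix a countable norm-dense set $\Phi\subseteq X^*$ and a countable dense set $D_0\subseteq X$ witnessing that $R$ satisfies the $q$-FHC Criterion; for each $x\in D_0$ let $(x_n)_{n\geq 0}$ (with $x_0=x$) be the associated sequence satisfying conditions (a)--(c) of Theorem \ref{1.3}. By Proposition \ref{P}$(2)$, the set $\mathcal{G}_{D_0,\Phi}$ of finite-rank operators $\sum_{j=1}^{k}x_j\otimes x_j^*$ with $x_j\in D_0$, $x_j^*\in\Phi$ is $\|.\|_{\mathcal{I}}$-dense in $\mathcal{I}(X)$, so it is the natural candidate for the dense set in Theorem \ref{1.3}.

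The crucial structural observation is that $\mathfrak{L}_R^{\,m}(y\otimes x^*)=R^m y\otimes x^*$, that is, $\mathfrak{L}_R$ leaves the functional coordinate $x^*$ untouched and acts only on the vector coordinate. Accordingly, to a generator $x\otimes x^*$ I associate the sequence $(x_n\otimes x^*)_{n\geq 0}$, and to a finite sum $F=\sum_{j=1}^{k}x_j\otimes x_j^*$ the sequence $F_n=\sum_{j=1}^{k}x_{j,n}\otimes x_j^*$. Condition (c) is then immediate: $\mathfrak{L}_R^{\,n^q}F_{m^q}=\sum_j R^{n^q}x_{j,m^q}\otimes x_j^*=\sum_j x_{j,m^q-n^q}\otimes x_j^*=F_{m^q-n^q}$ for $m>n$, and $\mathfrak{L}_R^{\,n^q}F_{n^q}=F$. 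For conditions (a) and (b) I use the hypothesis $\|y\otimes x^*\|_{\mathcal{I}}=\|y\|\|x^*\|$: because all terms in a single generator's series carry the \emph{same} functional, one may factor
\[
\sum_{n\in G} y_n\otimes x^*=\Big(\sum_{n\in G} y_n\Big)\otimes x^*,\qquad \Big\|\sum_{n\in G} y_n\otimes x^*\Big\|_{\mathcal{I}}=\Big\|\sum_{n\in G} y_n\Big\|\,\|x^*\|,
\]
so the uniform-in-$r$ unconditional convergence of $\sum_{n\leq r}R^{r^q-(r-n)^q}(x)$ and of $\sum_{n\geq 0}x_{(n+r)^q-r^q}$ in $X$ transfers verbatim to the corresponding $\mathcal{I}(X)$-series for $x\otimes x^*$. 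Summing the $k$ such estimates via the triangle inequality handles a general $F\in\mathcal{G}_{D_0,\Phi}$, establishing (a) and (b) uniformly in $r\geq 0$.

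With (a)--(c) verified on the dense set $\mathcal{G}_{D_0,\Phi}$, Theorem \ref{1.3} gives part $(1)$. For part $(2)$ it remains to check that $S_p(H)$ meets the hypotheses of $(1)$: it is a separable Banach ideal in which the finite-rank operators are dense, $H^*=H$ is separable, and a rank-one operator $x\otimes x^*$ has the single nonzero singular value $\|x\|\|x^*\|$, whence $\|x\otimes x^*\|_{S_p}=\|x\|\|x^*\|$; applying $(1)$ with $q=1$ yields frequent hypercyclicity of $\mathfrak{L}_R$ on $S_p(H)$. I do not anticipate a genuine obstacle. The only point requiring care is the uniform-in-$r$ transfer of unconditional convergence, and here it is painless precisely because $\mathfrak{L}_R$ fixes the functional coordinate, so no tensor estimate is needed and Lemmas \ref{lomma1}--\ref{lomma2} (indispensable in Theorem \ref{FHC}, where both factors move) can be dispensed with.
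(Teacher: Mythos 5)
Your proof is correct and matches the paper's intended route: the paper omits the proof of part (1) altogether, describing the proposition as ``an easy application of the $q$-FHC Criterion,'' and for part (2) merely recalls the same two facts you verify (density of the finite-rank operators in $S_p(H)$ and $\|u\otimes v\|_p=\|u\|\,\|v\|$), so you have simply supplied the omitted details, correctly exploiting that $\mathfrak{L}_R$ fixes the functional coordinate so that no analogue of Lemmas \ref{lomma1}--\ref{lomma2} is needed. One cosmetic remark: the conclusion of part (2) reads ``frequently hypercyclic,'' evidently a slip for ``$q$-frequently hypercyclic,'' so your restriction to $q=1$ there is unnecessary --- your part (1) already gives the $q$-version on $S_p(H)$ for whatever $q$ the hypothesis on $R$ provides.
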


\begin{proof}
The proof of (1) is omitted. To see that (2) is true, recall that
$\|u\otimes v\|_p=\|u\| \|v\|$ for all $u,v \in X$ and the finite rank operators on $X$ form a
dense subspace of $S_p(X)$.
\end{proof}

For the main theorem of this section, we state the following result on summability of a series in $S_p(H)$, cf. \cite{AC}, p. 152.

\begin{lem}\label{Lemma1}
Let $\displaystyle \{T_n\}_{n=1}^{\infty}\subset \mathcal{L}(H)$ be
such that $T_n^*T_m=T_nT_m^*=0$ whenever $m\neq n$. Then for $1\leq
p <\infty$
\begin{center}
$\|\sum_{n} T_n\|_p^p= \sum_{n} \|T_n\|_p^p$.
\end{center}
\end{lem}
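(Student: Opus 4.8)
The plan is to compute the Schatten $p$-norm through the singular values and to exploit that the hypotheses $T_n^*T_m=T_nT_m^*=0$ for $m\neq n$ force both the ranges and the co-ranges of the $T_n$ to be mutually orthogonal. First I would record these orthogonality relations explicitly: $T_n^*T_m=0$ means $\langle T_mx,T_ny\rangle=\langle T_n^*T_mx,y\rangle=0$ for all $x,y\in H$, so the closed ranges $K_n=\overline{\operatorname{range}T_n}$ are pairwise orthogonal; dually, $T_nT_m^*=0$ gives that the co-ranges $H_n=\overline{\operatorname{range}T_n^*}$ are pairwise orthogonal. Note that $H_n=(\ker T_n)^\perp=\overline{\operatorname{range}|T_n|}$ is exactly the initial space (support) of $T_n$, and that $T_n^*T_n$ is a positive operator whose range lies in $H_n$ and which vanishes on $H_n^\perp$.

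Next, writing $T=\sum_n T_n$, I would expand $T^*T=\sum_{m,n}T_m^*T_n=\sum_n T_n^*T_n$, the cross terms dropping out by hypothesis. Since each $T_n^*T_n$ is supported on $H_n$ and the $H_n$ are mutually orthogonal, $T^*T$ is block diagonal with respect to the orthogonal decomposition $H=\big(\bigoplus_n H_n\big)\oplus\big(\bigcap_n\ker T_n\big)$, acting as $T_n^*T_n$ on the $n$-th block and as $0$ on the last summand. Taking the positive square root block by block then yields $|T|=\sum_n|T_n|$, a block-diagonal positive operator whose restriction to $H_n$ is $|T_n|$. Consequently the eigenvalues of $|T|$, listed with multiplicity, are precisely the merger of the eigenvalue families of the individual $|T_n|$; that is, the singular values $(s_k(T))$ are obtained by pooling the families $(s_k(T_n))_k$. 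Since $\|S\|_p^p=\sum_k s_k(S)^p$ for $1\le p<\infty$, rearranging nonnegative terms gives $\|T\|_p^p=\sum_n\sum_k s_k(T_n)^p=\sum_n\|T_n\|_p^p$.

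The step I expect to require the most care is the passage to the infinite sum, since the block-diagonal square-root argument is cleanest for finitely many summands. I would therefore first establish the identity $\big\|\sum_{n\le N}T_n\big\|_p^p=\sum_{n\le N}\|T_n\|_p^p$ for every finite $N$, where the decomposition $\bigl|\sum_{n\le N}T_n\bigr|=\bigoplus_{n\le N}|T_n|$ is unambiguous, and then let $N\to\infty$. Both sides are nondecreasing in $N$, and $\sum_{n\le N}T_n$ converges in $S_p$ to $\sum_n T_n$ exactly when $\sum_n\|T_n\|_p^p<\infty$; monotone convergence of the pooled singular-value families then upgrades the finite identity to the stated one, with both sides simultaneously finite or infinite. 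The orthogonality hypotheses are doing all the work precisely in guaranteeing $|T|=\bigoplus_n|T_n|$, which is what makes the singular values of the sum the disjoint union of the singular values of the pieces.
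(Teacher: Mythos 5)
Your argument is correct and complete. Note that the paper itself gives no proof of this lemma at all: it is quoted verbatim from Carbery's paper on almost-orthogonality in the Schatten--von Neumann classes (reference [AC], p.~152), so there is nothing internal to compare against. Your route --- reading $T_n^*T_m=0$ as orthogonality of the ranges and $T_nT_m^*=0$ as orthogonality of the initial spaces $(\ker T_n)^\perp$, deducing $T^*T=\sum_n T_n^*T_n$ block-diagonally, taking the positive square root block by block to get $\lvert T\rvert=\bigoplus_n\lvert T_n\rvert$, and pooling singular values --- is the standard proof, and your care with the infinite sum (establishing the identity for finite $N$ and passing to the limit, observing that the partial sums are Cauchy in $S_p$ precisely when $\sum_n\|T_n\|_p^p<\infty$) correctly handles the one point where the statement needs interpretation.
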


We prove:

\begin{thm}\label{FHC-Schatten}
Let $1\leq p<\infty$, $R,T\in \mathcal{L}(H)$ and $D_1,D_2 \subset
H$. Let $D_1$ and $D_2$ both span dense subspaces of $H$. If for each $(x,y)\in
D_1\times D_2$, there exist sequences $(x_n,y_n)\in H\times H$ with $(x_0,y_0)=(x,y)$ and
\begin{itemize}
\item[\emph{(a)}] $\displaystyle \sum_{n=1}^{r} \|R^{r^q-(r-n)^q}(x)\|^p\|(T^*)^{r^q-(r-n)^q}(y)\|^p<\infty$ and $\displaystyle
\sum_{n=1}^{\infty} \|x_{(n+r)^q-r^q}\|^p\|y_{(n+r)^q-r^q}\|^p<\infty$ uniformly in $r\geq 0$,
\item[\emph{(b)}] $\big<R^{n}(x),R^{m}(x)\big>=\big<S_{n}(x),S_{m}(x)\big>=\big<(T^*)^{n}(y),(T^*)^{m}(y)\big>\\
=\big<J_{n}(y),J_{m}(y)\big>=0,$ for $m\neq n$; and
\item[\emph{(c)}]$R^{n^q}x_{n^q}=x$, $(T^*)^{n^q}y_{n^q}=y$, $R^{n^q}x_{m^q}=x_{m^q-n^q}$,
$(T^*)^{n^q}y_{m^q}=y_{m^q-n^q}$, $\forall m>n\geq 0$,
\end{itemize}
then $C_{R,T}$ is $q$-frequently hypercyclic on $(S_p(H),\|.\|_p)$.
\end{thm}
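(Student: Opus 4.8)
The plan is to apply the $q$-FHC Criterion (Theorem \ref{1.3}) to the map $C_{R,T}$ on the separable Banach space $(S_p(H),\|.\|_p)$. Writing $u\otimes v$ for the rank-one operator $z\mapsto\langle z,v\rangle u$, the computation that drives everything is $C_{R,T}(u\otimes v)=Ru\otimes T^*v$, and hence $C_{R,T}^{n}(u\otimes v)=R^{n}u\otimes (T^*)^{n}v$ for every $n\ge 0$. Since $D_1$ and $D_2$ span dense subspaces of $H$, and the finite-rank operators are $\|.\|_p$-dense in $S_p(H)$, the set $\mathcal{D}$ of all finite linear combinations $\sum_j c_j\,x_j\otimes y_j$ with $x_j\in D_1$, $y_j\in D_2$ is $\|.\|_p$-dense in $S_p(H)$ (approximate a finite-rank operator $\sum_i u_i\otimes v_i$ by replacing each $u_i,v_i$ with nearby elements of $\operatorname{span}D_1,\operatorname{span}D_2$). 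This $\mathcal{D}$ will serve as the dense set required by Theorem \ref{1.3}.

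To each generator $x\otimes y$ I would attach the sequence $(x\otimes y)_n:=x_n\otimes y_n$, where $(x_n),(y_n)$ are supplied by the hypothesis, and extend to $\mathcal{D}$ by linearity, putting $G_n=\sum_j c_j\,x_{j,n}\otimes y_{j,n}$ for $G=\sum_j c_j\,x_j\otimes y_j$. Condition (c) of Theorem \ref{1.3} is then immediate: using $C_{R,T}^{n^q}(x_{m^q}\otimes y_{m^q})=R^{n^q}x_{m^q}\otimes (T^*)^{n^q}y_{m^q}$ together with condition (c) of the present theorem gives $x_{m^q-n^q}\otimes y_{m^q-n^q}=(x\otimes y)_{m^q-n^q}$ for $m>n$ and $x\otimes y$ for $m=n$, and linearity propagates this to every $G\in\mathcal{D}$.

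The heart of the matter is conditions (a) and (b) of Theorem \ref{1.3}, namely the unconditional convergence, uniformly in $r$, of $\sum_{n\le r}C_{R,T}^{r^q-(r-n)^q}(x\otimes y)$ and of $\sum_{n\ge 0}(x\otimes y)_{(n+r)^q-r^q}$ in $\|.\|_p$; here Lemma \ref{Lemma1} is the key tool. For the first series the terms are $T_n=R^{e_n}x\otimes (T^*)^{e_n}y$, with the strictly increasing, hence distinct, exponents $e_n=r^q-(r-n)^q$. A direct computation gives $T_n^*T_m=\langle R^{e_m}x,R^{e_n}x\rangle\,\big((T^*)^{e_n}y\otimes (T^*)^{e_m}y\big)$ and $T_nT_m^*=\langle (T^*)^{e_m}y,(T^*)^{e_n}y\rangle\,\big(R^{e_n}x\otimes R^{e_m}x\big)$, and both vanish for $m\ne n$ by the orthogonality relations in (b). Lemma \ref{Lemma1}, combined with $\|u\otimes v\|_p=\|u\|\|v\|$, then yields for every finite $F$ the identity $\big\|\sum_{n\in F}T_n\big\|_p^{\,p}=\sum_{n\in F}\|R^{e_n}x\|^{p}\|(T^*)^{e_n}y\|^{p}$; the first part of (a) makes these tails uniformly small in $r$, and validity of the identity over arbitrary finite $F$ makes the convergence automatically unconditional. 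The second series is treated identically, with $T_n=x_{f_n}\otimes y_{f_n}$, $f_n=(n+r)^q-r^q$, invoking the orthogonality of the sequences $(x_n),(y_n)$ recorded in (b) and the second part of (a). Since a finite sum of series each converging unconditionally and uniformly in $r$ is again of this type, (a) and (b) hold for every $G\in\mathcal{D}$, and Theorem \ref{1.3} delivers the conclusion.

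I expect the main obstacle to be controlling the Schatten $p$-norm of a sum of rank-one operators, which in general is not determined by the individual norms. The orthogonality hypotheses in (b) are precisely engineered so that the rank-one terms satisfy $T_n^*T_m=T_nT_m^*=0$ for $m\ne n$, the hypothesis of Lemma \ref{Lemma1}; recognizing that this collapses the $\|.\|_p$-estimate to the scalar $\ell^p$-series of condition (a) is the decisive step, after which the verification is routine.
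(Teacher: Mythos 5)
Your proposal is correct and follows essentially the same route as the paper: both arguments take the span of the rank-one operators $x\otimes y$ ($x\in D_1$, $y\in D_2$) as the dense set, attach the sequences $x_n\otimes y_n$, and reduce the uniform unconditional convergence in $\|.\|_p$ to the scalar series of hypothesis (a) via the vanishing of $T_n^*T_m$ and $T_nT_m^*$ and Lemma \ref{Lemma1}. Your write-up is in fact slightly more complete, since you verify condition (c) of the $q$-FHC Criterion explicitly, which the paper leaves implicit.
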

\begin{proof}
Let $\Delta=\text{span}\{x\otimes y: x\in D_1, y\in D_2\}$. Note
that $\Delta$ can also be written as the span of the set $\{x\otimes
y: x\in \text{span}D_1, y\in \text{span} D_2\}$. Since span$D_1$ and
span$D_2$ are dense in $H$, it can be proved that $\Delta$ is dense
in $S_p(H)$, $1\leq p <\infty$.

Let $F=\sum_{k=1}^{N} a_k x_k\otimes y_k \in \Delta$. Corresponding to $x_k$ and $y_k$, we obtain sequences $(x_{k,n})$ and $(y_{k,n})$ as in the hypothesis.
Set $F_n=\sum_{k=1}^{N} a_k x_{k,n}\otimes y_{k,n}$. Consider the series $\sum_{n\leq r} C_{R,T}^{r^q-(r-n)^q}(F)=\sum_{k=1}^{N} a_k
\sum_{n=1}^{\infty} R^{r^q-(r-n)^q}(x_k)\otimes (T^*)^{r^q-(r-n)^q}(y_k)$ and
$\sum_n F_{(n+r)^q-r^q}=\sum_{k=1}^{N}a_k \sum_{n=1}^{\infty}
x_{k,(n+r)^q-r^q}\otimes y_{k,(n+r)^q-r^q}$. It suffices to prove that $\sum_{n\leq r}R^{r^q-(r-n)^q}(x)\otimes
(T^*)^{r^q-(r-n)^q}(y)$ and $\sum_n x_{k,(n+r)^q-r^q}\otimes y_{k,(n+r)^q-r^q}$ are unconditionally convergent in
$S_p(H)$, uniformly in $r$, for all $x\in D_1$ and $y\in D_2$.

Write $T_{n,r}=R^{r^q-(r-n)^q}(x)\otimes (T^*)^{r^q-(r-n)^q}(y)$, $n\geq 1$. Then
$T_{n,r}^*=(T^*)^{r^q-(r-n)^q}(y)\otimes R^{r^q-(r-n)^q}(x)$. If $<.>$ is the inner
product in $H$, then $T_{n,r}^*(z)=\big<z,R^{r^q-(r-n)^q}(x)\big>(T^*)^{r^q-(r-n)^q}(y)$
and $T_{m,r}(z)=\big<z,(T^*)^{r^q-(r-m)^q}(y)\big>R^{r^q-(r-m)^q}(x)$ and so
\begin{center}
$T_{n,r}^*T_{m,r}(z)=\big<z,(T^*)^{r^q-(r-m)^q}(y)\big>\big<R^{r^q-(r-m)^q}x,R^{r^q-(r-n)^q}x\big>(T^*)^{r^q-(r-n)^q}(y).$
\end{center}
Similarly
\begin{center}
$T_{n,r}T_{m,r}^*(z)=\big<z,R^{r^q-(r-m)^q}x\big>\big<(T^*)^{r^q-(r-m)^q}y,(T^*)^{r^q-(r-n)^q}y\big>R^{r^q-(r-n)^q}x$.
\end{center}
From part $(b)$ in the hypotheses, we get $T^*_{n,r}T_{m,r}=T_{n,r}T_{m,r}^*=0$,
$m\neq n$. Since $\|u\otimes v\|_p=\|u\|\|v\|$ for all $u,v\in H$,
Lemma \ref{Lemma1} and the hypothesis $(a)$ yield that
$\sum_{n\leq r}R^{r^q-(r-n)^q}(x)\otimes (T^*)^{r^q-(r-n)^q}(y)$ is unconditionally convergent in
$S_p(H)$, uniformly in $r$. Similarly, one can obtain that $\sum_n
x_{k,(n+r)^q-r^q}\otimes y_{k,(n+r)^q-r^q}$ is unconditionally convergent in $S_p(H)$, uniformly in $r$.
Thus the condition $(a)$ of Theorem \ref{FHC} is
satisfied by $C_{R,T}$ in $S_p(H)$.
\end{proof}

Next, from Theorem \ref{FHC-Schatten}, we obtain conditions on the
weight sequences $(w_n)$ and $(\mu_n)$ that are sufficient for the
$C_{B_w,F_\mu}$ on $S_p(\ell^2)$ and $C_{T_a,S_b}$ on
$S_p(\ell^2(\mathbb{Z}))$ to be $q$-frequently hypercyclic, $1\leq
p<\infty$.

\begin{prop}\label{unifhcS}
$(1)$ If $\displaystyle
\sum_{n=1}^{\infty}|w_1w_2..w_{(n+r)^q-r^q+i}\mu_1\mu_2..\mu_{(n+r)^q-r^q+j}|^{-p}<\infty$ uniformly in $r\geq 0$
for all $i,j\geq0$, then $C_{B_w,F_\mu}$ is $q$-frequently hypercyclic
on $S_p(\ell^2)$.\\
$(2)$ If $\displaystyle \sum_{n=1}^{\infty}|a_1a_2..a_{(n+r)^q-r^q+i}b_1b_2..b_{(n+r)^q-r^q+j}|^{-p}<\infty$ and\\ $\displaystyle
\sum_{n=0}^{r}{|a_ia_{i-1}..a_{i-r^q+(r-n)^q+1}b_jb_{j-1}..b_{j-r^q+(r-n)^q+1}
|^p}<\infty$ uniformly in $r\geq 0$ for all $i,j\in\mathbb{Z}$, then $C_{T_a,S_b}$ is
$q$-frequently hypercyclic on $S_p(\ell^2(\mathbb{Z}))$.
\end{prop}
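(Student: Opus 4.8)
The plan is to obtain both parts as direct applications of Theorem \ref{FHC-Schatten}, reusing the shift bookkeeping already set up for the operator-norm statements in Propositions \ref{unifhc} and \ref{bifhc}. The orthogonality hypothesis (b) of Theorem \ref{FHC-Schatten}, which at first glance looks like the crux, will in fact be free here: since we test against the standard orthonormal basis of $\ell^2$, each iterate $B_w^n(e_i)$, $S_w^n(e_i)$, $(F_\mu^*)^n(e_j)$, $J_\mu^n(e_j)$ (and their bilateral analogues) is a scalar multiple of a single basis vector, so distinct powers land on orthogonal vectors and (b) holds automatically. Thus the real content is the verification of (a), which is where the summability hypotheses enter, while (c) is immediate from right-inverse relations.

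For part (1) I would take $H=\ell^2$, $R=B_w$ and $T=F_\mu$ (so $T^*=F_\mu^*$ is the backward shift with weights $\mu_n$), and $D_1=D_2=\{e_n:n\ge 0\}$, each spanning a dense subspace. Following Proposition \ref{unifhc}, I introduce the right inverses $S_w(e_n)=w_{n+1}^{-1}e_{n+1}$ and $J_\mu(e_n)=\mu_{n+1}^{-1}e_{n+1}$ satisfying $B_wS_w=F_\mu^*J_\mu=I$, and set $x_n=S_w^n(e_i)$, $y_n=J_\mu^n(e_j)$; condition (c) is then immediate. To check (a): because $B_w^m(e_i)=0$ for $m>i$ while $r^q-(r-n)^q\ge n$, the first series in (a) has at most $i$ nonzero terms (with uniformly bounded norms, as the weights are bounded), so it converges unconditionally, uniformly in $r$. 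For the second series I would compute $\|x_{(n+r)^q-r^q}\|\,\|y_{(n+r)^q-r^q}\|=|w_{i+1}\cdots w_{i+(n+r)^q-r^q}|^{-1}|\mu_{j+1}\cdots\mu_{j+(n+r)^q-r^q}|^{-1}$, which differs from the summand in the hypothesis only by the fixed factor $|w_1\cdots w_i\,\mu_1\cdots\mu_j|$; the assumed uniform summability of part (1) then yields $\sum_n\|x_{(n+r)^q-r^q}\|^p\|y_{(n+r)^q-r^q}\|^p<\infty$ uniformly in $r$, and Theorem \ref{FHC-Schatten} gives the conclusion.

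For part (2) I would run the same scheme on $H=\ell^2(\mathbb{Z})$ with $R=T_a$, $T=S_b$ and right inverses $S(e_n)=a_{n+1}^{-1}e_{n+1}$, $J(e_n)=b_{n+1}^{-1}e_{n+1}$, setting $x_n=S^n(e_i)$, $y_n=J^n(e_j)$; conditions (b) and (c) go through verbatim. The one genuine difference, and the main point to watch, is that $T_a$ and $S_b^*$ are honest bilateral shifts and never kill a basis vector, so the first series in (a) no longer truncates. Here $\|T_a^{m}(e_i)\|=|a_i a_{i-1}\cdots a_{i-m+1}|$ and $\|(S_b^*)^{m}(e_j)\|=|b_j\cdots b_{j-m+1}|$ with $m=r^q-(r-n)^q$, and since $i-m+1=i-r^q+(r-n)^q+1$ these match the indices in the second summability hypothesis, which therefore supplies exactly the uniform-in-$r$ convergence needed; the second series is handled as in part (1) via the first summability hypothesis, up to a constant. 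The only real obstacle throughout is this index bookkeeping, namely aligning the two weight products in each hypothesis with the norms of the shift iterates and noting that shifting the product's starting index from $w_1$ (resp. $a_1$) to $w_{i+1}$ (resp. $a_{i+1}$) costs only a fixed multiplicative constant, so uniformity in $r$ is preserved.
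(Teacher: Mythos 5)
Your proposal is correct and follows essentially the same route as the paper: both parts are direct applications of Theorem \ref{FHC-Schatten} with $D_1=D_2$ the standard orthonormal basis, the right inverses $S_w,J_\mu$ (resp. $S,J$) from Propositions \ref{unifhc} and \ref{bifhc}, orthogonality in (b) holding automatically because shift iterates of basis vectors are multiples of distinct basis vectors, the first series in (a) truncating in the unilateral case, and the stated summability hypotheses supplying condition (a) otherwise. Your explicit remark that re-indexing the weight products from $w_1$ to $w_{i+1}$ only costs a fixed constant is a detail the paper leaves implicit.
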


\begin{proof}
To prove part (1), choose $D_1=D_2=\{e_n: n\geq 0\}$, the standard
orthonormal basis in $\ell^2$. Let $S_n$ and $J_n$ be the maps as
considered in the proof of Proposition \ref{unifhc}, i.e., $S_{n}(e_i)=S_w^{n}(e_i)=\frac {1}
{w_{i+1}..w_{i+n}} e_{i+n}$ and $J_{n}(e_j)=J_\mu^{n}(e_j)=\frac {1}
{\mu_{j+1}..\mu_{j+n}}e_{j+n}$. Note that $\big<S_n(e_j),S_m(e_j)\big>=\big<J_n(e_j),J_m(e_j)\big>=0$ for
$n\neq m$. As $B_w^n(e_i)=0$ for sufficiently large $n$, we have that\\ $\sum_{n\leq r} \|B_w^{r^q-(r-n)^q}(e_i)
\|^p\|(F_\mu^*)^{r^q-(r-n)^q}(e_j)\|^p<\infty$ uniformly in $r$. Moreover, from the
hypothesis, we have $\sum_n \|S_{(n+r)^q-r^q}(e_i)\|^p
\|J_{(n+r)^q-r^q}(e_j)\|^p<\infty$ uniformly in $r$. Now Theorem \ref{FHC-Schatten} yields
the result.

To prove part (2), consider the maps $S_n$ and $J_n$ in the proof of
Proposition \ref{bifhc} and choose $D_1=D_2=\{e_n:n\in
\mathbb{Z}\}$, the standard orthonormal basis in
$\ell^2(\mathbb{Z})$. Then, for $i,j\in \mathbb{Z}$, we have
\begin{equation*}
S_n(e_i)=S^n(e_i)=\frac {1} {a_{i+1}a_{i+2}..a_{i+n}}
e_{i+n}~~\text{and}~~ J_n(e_j)=J^n(e_j)=\frac {1}
{b_{j+1}b_{j+2}..b_{j+n}}e_{j+n},
\end{equation*}
and
\begin{equation*}
T_a^n(e_i)=a_ia_{i-1}...a_{i-n}
e_{i-n-1}~~\text{and}~~(S_b^*)^n(e_j)=b_jb_{j-1}...b_{j-n}
e_{j-n-1}.
\end{equation*}
It follows by the hypotheses that the series $\displaystyle \sum_{n\geq 1} \|S_{(n+r)^q-r^q}(e_i)\|^p
\|J_{(n+r)^q-r^q}(e_j)\|^p$
as well as\\ $\displaystyle\sum_{n\leq r} \|T_a^{r^q-(r-n)^q}(e_i)\|^p
\|(S_b^*)^{r^q-(r-n)^q}(e_j)\|^p $ converges uniformly $r\geq 0$.
\end{proof}

\section{Frequent Hypercyclicity in $S_p(H^2(\mathbb{D}))$ and $\mathcal{N}(\ell^p)$}

This section includes results on frequent hypercyclicity of specific
operators of the form $C_{R,T}$ defined on the $p$th Schatten von-Neumann class of operators on the Hardy space $H^2(\mathbb{D})$,
$1\leq p<\infty$, as well as on the space $\mathcal{N}(\ell^p)$ of
all nuclear operators on $\ell^p$, $1<p<\infty$. Let us recall the \textbf{Eigenvalue Criterion}, due to S. Grivaux.
\begin{prop} \cite{Grivaux} \label{Grivaux}
Let $X$ be a separable, complex Banach space and $T\in \mathcal{L}(X)$. If for every countable subset $D$ of the unit circle $S^1$, the set $\bigcup_{\alpha \in S^1\setminus D} Ker(T-\alpha I)$ spans a dense subspace of $X$, then $T$ is frequently hypercyclic.
\end{prop}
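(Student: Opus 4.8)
The plan is to deduce frequent hypercyclicity from a measure-theoretic statement: I would try to produce a $T$-invariant Borel probability measure $\mu$ on $X$ that has full support and with respect to which $T$ is ergodic. Granting this, Birkhoff's pointwise ergodic theorem applies to the indicator $\mathbf{1}_U$ of any nonempty open set $U$, giving, for $\mu$-almost every $x$,
\[
\lim_{N\to\infty}\frac1N\sum_{n=1}^{N}\mathbf{1}_U(T^nx)=\mu(U)>0,
\]
the strict positivity coming from full support. Intersecting the resulting full-measure sets over a countable base of open sets still leaves a set of full $\mu$-measure, hence a nonempty set, of points whose return times to every $U$ have positive lower density; any such point is a frequently hypercyclic vector. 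Thus the entire problem is shifted onto the construction of $\mu$.

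The engine for building $\mu$ is the family of unimodular eigenvectors, since $Tu=\lambda u$ with $|\lambda|=1$ means that $T$ acts as a rotation on the line $\mathbb{C}u$, and rotations preserve Haar measure. The first step is therefore a selection: extract from the hypothesis a countable family $(u_k)$ of eigenvectors, $Tu_k=\lambda_k u_k$, whose linear span is dense in $X$ and whose eigenvalues $\lambda_k=e^{2\pi i\theta_k}$ are chosen so that no $\lambda_k$ is a root of unity and the numbers $1,\theta_1,\theta_2,\dots$ are rationally independent. This is exactly where the strength of the hypothesis is used: the roots of unity together with the countably many arithmetic relations to be avoided form a countable set $D\subset S^1$, and the assumption guarantees that the eigenvectors with eigenvalues in $S^1\setminus D$ still span densely, so the selection can be carried out by a standard inductive density argument.

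The final and hardest step is to turn $(u_k,\lambda_k)$ into an invariant ergodic measure of full support. The natural device is a random series $x(\omega)=\sum_k c_k\,g_k(\omega)\,u_k$ with independent symmetric coefficients $g_k$ and scalars $c_k$ decaying fast enough to force almost-sure convergence in $X$; pushing the law of $\omega$ forward yields a candidate $\mu$, whose invariance follows because $T$ only multiplies the $k$th mode by the unimodular $\lambda_k$ while the distribution of $\lambda_k g_k(\omega)$ matches that of $g_k(\omega)$. Full support is inherited from density of $\mathrm{span}\{u_k\}$, and ergodicity reduces, via independence of the coordinates, to ergodicity of the rotation $(z_k)\mapsto(\lambda_k z_k)$ on the infinite torus, which holds precisely because $1,\theta_1,\theta_2,\dots$ are rationally independent (Kronecker--Weyl). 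The main obstacle is genuinely analytic rather than formal: for a general separable complex Banach space one must guarantee that the random series converges almost surely in norm and defines a bona fide Borel measure, and then verify that the push-forward is honestly ergodic with full support. Controlling this convergence through the decay of $c_k$ and the geometry of $X$, without the Hilbertian orthogonality that made the Schatten-class arguments of the previous sections so clean, is the delicate point, and it is here that the \emph{perfectly spanning} form of the hypothesis is indispensable.
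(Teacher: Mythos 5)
You should first be aware that the paper offers no proof of this proposition: it is quoted verbatim from Grivaux's Indiana paper, and the argument there is \emph{not} the ergodic-measure argument you outline. Grivaux constructs a frequently hypercyclic vector directly, as an explicit series of eigenvectors whose unimodular eigenvalues are selected (using the perfect-spanning hypothesis) so that the partial orbits return to prescribed neighbourhoods along sets of positive lower density; no invariant measure appears. This is deliberate: at the time, the measure-theoretic route was not known to work in an arbitrary separable Banach space. So your comparison point is the literature, and the relevant question is whether your sketch can be completed. The Birkhoff step and the inductive selection of a densely spanning family $(u_k)$ with rationally independent arguments are fine.

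The genuine gap is the ergodicity of the measure you build from a \emph{countable} family of eigenvectors, and it is not merely a technicality about convergence in Banach spaces. If the coefficients $g_k$ are rotation-invariant Gaussians, the induced system on coefficient space is $(g_k)\mapsto(\lambda_k g_k)$, which fixes every modulus $|g_k|$; equivalently, you have a Gaussian dynamical system whose spectral measure $\sum_k |c_k|^2\delta_{\lambda_k}$ is purely atomic, and by the classical Fomin--Grenander--Maruyama theorem such a system is ergodic only if that measure is continuous. Rational independence of the $\theta_k$ makes the \emph{angular} factor ergodic but leaves the radial invariants untouched, so $\mu$ is invariant with full support yet not ergodic, and Birkhoff only gives convergence to a conditional expectation that may vanish on sets of positive measure. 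If instead you force $|g_k|=1$ (Steinhaus coefficients) to kill the radial invariants, then $x(\omega)$ is a continuous image of the compact group $\mathbb{T}^{\mathbb{N}}$ (under the absolute convergence you impose), so $\mu$ is carried by a compact subset of the infinite-dimensional space $X$ and cannot have full support. With countably many unimodular atoms you can therefore achieve invariance plus full support, or invariance plus ergodicity, but not both. The known repair replaces the countable family by an eigenvector field parametrized by a \emph{continuous} measure $\sigma$ on $S^1$, so that the spectral measure has no atoms; extracting such a field from the perfectly spanning hypothesis and controlling the resulting random integral in a general Banach space is precisely the hard analytic content (resolved only later by Bayart and Matheron), and is what your sketch leaves unaddressed.
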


Corresponding to a sequence $\beta=(\beta_n)$,
$\beta_n>0$, $n\geq 0$, let $\big(H^\beta(\mathbb{D}),<.>\big)$ be a
Hilbert space of complex functions, analytic on the open unit disc
$\mathbb{D}$ such that the evaluation mappings $f\rightarrow f(z)$
are continuous at each $z\in \mathbb{D}$, i.e. there exists $k_z\in
H^\beta(\mathbb{D})$ such that $f(z)=\big<f,k_z\big>$ for each $f\in
H^\beta(\mathbb{D})$. Such a function $k_z$ is called \textbf{a reproducing
kernel} at $z\in \mathbb{D}$. Also, assume that
$\{e_n\}_{n=0}^{\infty}$ forms an orthonormal basis for
$H^\beta(\mathbb{D})$, where $e_n(z)=\beta_n z^n$. Note that when
$\beta_n=1$ for all $n\geq 0$, we have the Hardy space
$H^2(\mathbb{D})$.

Let $M_\varphi$ be the multiplication operator $f(z)\rightarrow
\varphi(z) f(z)$ on $H^\beta(\mathbb{D})$, corresponding to an
analytic function $\varphi$ on $\mathbb{D}$ and $M^*_\varphi$ be the
Hilbert space adjoint of $M_\varphi$. Our aim is to establish the
frequent hypercyclicity of $C_{M^*_\varphi,M_\psi}$ on
$S_p(H^\beta(\mathbb{D}))$, where $M_\varphi$ and $M_\psi$ are
bounded multiplication operators on $H^\beta(\mathbb{D})$
corresponding to the analytic functions $\varphi$ and $\psi$ on
$\mathbb{D}$. Let us first prove

\begin{lem}\label{lem}
Let $\varphi$ and $\psi$ be non-zero analytic functions on
$\mathbb{D}$ such that at least one of them is non-constant and
$|\varphi(z)\psi(w)|=1$ for some $z,w \in \mathbb{D}$. Then
\begin{center}
\emph{span} $\{k_z\otimes k_w: \overline{\varphi(z)}\psi(w)\in
S^1\setminus D\}$
\end{center}
is dense in the space $S_1(H^\beta(\mathbb{D}))$ for every countable
set $D\subset S^1$.
\end{lem}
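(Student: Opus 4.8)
The plan is to pass to the dual space and combine a Hahn--Banach argument with the identity theorem for analytic functions. Since $\left(S_1(H^\beta(\mathbb{D})),\|\cdot\|_1\right)^*\cong\left(\mathcal{L}(H^\beta(\mathbb{D})),\|\cdot\|_{op}\right)$ under the trace pairing $(A,S)\mapsto\operatorname{tr}(AS)$, it suffices to show that any $A\in\mathcal{L}(H^\beta(\mathbb{D}))$ annihilating the given spanning set must be $0$. A direct computation with the rank-one operator $(k_z\otimes k_w)f=\langle f,k_w\rangle k_z$ gives $\operatorname{tr}(A(k_z\otimes k_w))=\langle Ak_z,k_w\rangle$, so the task reduces to proving: if $\langle Ak_z,k_w\rangle=0$ for every $(z,w)$ with $\overline{\varphi(z)}\psi(w)\in S^1\setminus D$, then $A=0$.

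First I would record the analytic structure of $F(z,w):=\langle Ak_z,k_w\rangle$. Writing $k_z=\sum_n\overline{e_n(z)}e_n$ with $e_n(z)=\beta_n z^n$, one sees that $F$ is holomorphic in $w$ and anti-holomorphic in $z$ on $\mathbb{D}\times\mathbb{D}$. The goal is then to force $F\equiv0$ on $\mathbb{D}\times\mathbb{D}$; since the reproducing kernels $\{k_z\}$ and $\{k_w\}$ are total in $H^\beta(\mathbb{D})$, this immediately yields $A=0$.

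The heart of the argument is a one-variable slicing. Assume first that $\psi$ is non-constant (the case where only $\varphi$ is non-constant is symmetric, slicing in $z$ instead). Starting from a point $(z_0,w_0)$ with $|\varphi(z_0)\psi(w_0)|=1$, I would use that the zeros of $\psi'$ are isolated to choose $w_*$ near $w_0$ with $\psi'(w_*)\neq0$, so that $\psi$ is biholomorphic from a neighborhood of $w_*$ onto an open set $W$; by the open mapping theorem and continuity of $\varphi$, there is a nonempty open set $V\ni z_0$ such that for every $z\in V$ the circle $\{|\zeta|=1/|\varphi(z)|\}$ meets $W$. Pulling an arc of this circle back through $\psi^{-1}$ produces, for each fixed $z\in V$, a real-analytic arc of points $w\in\mathbb{D}$ with $|\varphi(z)\psi(w)|=1$, along which $\overline{\varphi(z)}\psi(w)$ sweeps out an arc of $S^1$. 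Since $D$ is countable and $\psi(w)=\mathrm{const}$ has only discrete solutions, all but countably many $w$ on this arc satisfy $\overline{\varphi(z)}\psi(w)\in S^1\setminus D$ and hence $F(z,w)=0$; these $w$ accumulate in $\mathbb{D}$, so the identity theorem applied to the holomorphic map $w\mapsto F(z,w)$ gives $F(z,\cdot)\equiv0$ for every $z\in V$.

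Finally I would propagate the vanishing to the whole bidisc: for each fixed $w$ the anti-holomorphic function $z\mapsto F(z,w)$ vanishes on the open set $V$, hence vanishes identically, so $F\equiv0$ on $\mathbb{D}\times\mathbb{D}$ and $A=0$. I expect the geometric slicing step to be the main obstacle: one must ensure that (i) a regular point $w_*$ and an open set $V$ can be chosen so that the construction runs uniformly in $z$, handling the degenerate possibilities $\psi'(w_0)=0$ or $\varphi$ constant, and (ii) after deleting the countably many $w$ with $\overline{\varphi(z)}\psi(w)\in D$ the remaining set still has an accumulation point in $\mathbb{D}$, which is exactly what licenses the one-variable identity theorem. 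The hypothesis that at least one of $\varphi,\psi$ is non-constant (together with $|\varphi(z)\psi(w)|=1$ somewhere) is precisely what makes the swept arc non-degenerate, and it is indispensable.
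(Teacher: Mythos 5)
Your proposal is correct and follows essentially the same route as the paper: reduce by the trace duality $S_1(H^\beta(\mathbb{D}))^*\cong\mathcal{L}(H^\beta(\mathbb{D}))$ to showing that an annihilating $A$ vanishes, and then apply the identity theorem in each variable separately, using that the eigenvalue condition $\overline{\varphi(z)}\psi(w)\in S^1\setminus D$ holds on a set of $(z,w)$ that accumulates in each slice. Your explicit arc-slicing via a local biholomorphism of $\psi$ is just a more hands-on version of the paper's argument that the relevant sets $U$ and $V_1$ are uncountable (hence have limit points in $\mathbb{D}$), and your function $F(z,w)=\langle Ak_z,k_w\rangle$ is exactly the power series $\sum_n\beta_n\overline{(A^*e_n)(z)}w^n$ the paper expands.
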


\begin{proof}
We write $\mathcal{H}=H^\beta(\mathbb{D})$ and
$\overline{\varphi(\mathbb{D})}=\{\overline{\varphi(z)}:z\in
\mathbb{D}\}$. By the open mapping theorem for analytic functions,
the set
$\overline{\varphi(\mathbb{D})}\psi(\mathbb{D})=\{\overline{\varphi(z)}\psi(w):
z,w\in \mathbb{D}\}=\bigcup_{z\in
\mathbb{D}}(\overline{\varphi(z)}\psi(\mathbb{D}))$ is non-empty and
open. Hence there exists an open arc $\Gamma$ in $S^1$ such that
$\Gamma \subset \overline {\varphi(\mathbb{D})}\psi(\mathbb{D})$; let us assume that
this arc $\Gamma$ is the maximal one.

Consider the set
\begin{center}
$U\times V=\{(z,w)\in \mathbb{D}\times \mathbb{D}:
\overline{\varphi(z)}\psi(w)\in \Gamma\setminus D\}$.
\end{center}
We claim that $U$ is uncountable, and for each $z\in U$, there
exists an uncountable set $V_1\subseteq V$ such that
\begin{equation} \label{kuthira}
\overline{\varphi(z)}\psi(w) \in \Gamma\setminus D,
~~\text{for}~\text{all}~~ w\in V_1.
\end{equation}
To prove this, assume that both $\varphi$ and $\psi$ are
non-constant. In this case, $\overline{\varphi(\mathbb{D})}$ is a
non-empty open set. If $|\overline{\phi(z)} \psi(w)|=1$ for some
$(z,w)\in \mathbb{D}\times \mathbb{D}$, then
$\psi(w)\overline{\varphi(\mathbb{D})}$ is non-empty and open and
so, we can find an arc $\Gamma_1 \subset \Gamma$ such that
$\Gamma_1\subset \psi(w)\overline{\varphi(\mathbb{D})}$ as above.
Since $D$ is countable and $\Gamma_1 \setminus D \subset
\psi(w)\overline {\varphi(\mathbb{D})}$, the set $U$ has to be
uncountable. Now fix $z\in U$. Then the set
$\overline{\varphi(z)}\psi(\mathbb{D})$, being non-empty and open,
contains $\Gamma_2 \setminus D$ for some sub-arc $\Gamma_2$ of
$\Gamma$. This proves the claim when $\varphi$ and $\psi$ are non-constant.
Now assume that $\psi$ is constant, say $\psi=c$ and $\varphi$ is
non-constant. We can proceed as above to prove that the set $U$ is
uncountable since $c\overline{\varphi(\mathbb{D})}$ is a non-empty
open set containing $\Gamma \setminus D$. Also, since $\psi$ is
constant, we can take $V_1=V=\mathbb{D}$. Finally when $\varphi$ is
constant and $\psi$ is non-constant, we proceed similarly to get the
result. Hence our claim is established.

We now show that $\Lambda=$ span$\{k_z\otimes k_w: z\in U, w\in V\}$
is dense in the space $S_1(H)$. Recall that the trace of $A\in S_1(\mathcal{H})$ is given by
$tr(A)=\sum_{n\geq0}\big<Ae_n,e_n\big>$, where $e_n(z)=\beta_nz^n$, $n\geq 0$. Also we have $S_1(H)^*=\mathcal{L}(H)$
with respect to the duality-pairing $(A,T)=tr(AT)$, $T\in
S_1(H)$ and $A\in \mathcal{L}(H)$.\\ Let $A\in \mathcal{L}(H)$ be such that $tr(AT)=0$ for all
$T\in\Lambda$. For $T=k_z\otimes k_w$, we have
$Te_n=\big<e_n,k_w\big>k_z=e_n(w)k_z=\beta_nw^nk_z$ and
\begin{align*}
tr(AT)&=\sum_{n\geq 0}\big<ATe_n,e_n\big>\\
&=\sum_{n\geq 0}\beta_n w^n \big<k_z,A^*e_n\big>\\
&=\sum_{n\geq 0}\beta_n\overline{(A^*e_n)(z)}w^n.
\end{align*}

\noindent Since $AT\in S_1(H)$, the above power series is well-defined for all
$z,w\in \mathbb{D}$. Hence it is an analytic function in the
variable $w$ for a fixed $z\in \mathbb{D}$. For $z\in U$, there exists an uncountable set $V_1$ such that
\eqref{kuthira} holds. Since $V_1$ is uncountable, it has a limit point in
$\mathbb{D}$. As $\beta_n>0$ for all $n\geq 0$ and $tr(AT)=0$, it
follows that the coefficients of
the above power series are all zero, i.e., $A^*(e_n)(z)=0$ for all $n\geq
0$. Similarly, since $z \in U$ is arbitrary and $U$ is uncountable,
we have $A^*(e_n)=0$, $\forall$ $n\geq 0$. As $\{e_n:n\geq 0\}$ spans a dense subspace of $\mathcal{H}$,
we conclude that $A=0$. Therefore the set $\Lambda$ is dense in $S_1(\mathcal{H})$. The
proof is now complete.
\end{proof}

The above lemma yields

\begin{thm}\label{ghardy}
Let $\varphi$ and $\psi$ be non-zero analytic functions on
$\mathbb{D}$ such that the corresponding multiplication operators
are bounded on $H^\beta(\mathbb{D})$. If one of the maps $\varphi$
and $\psi$ is non-constant and $|\varphi(z)\psi(w)|=1$ for some
$z,w\in \mathbb{D}$, then $C_{M^*_\varphi,M_\psi}$ is frequently
hypercyclic on $(S_p(H^\beta(\mathbb{D})),\|.\|_p)$,
$(\mathcal{K}(H^\beta(\mathbb{D})),\|.\|_{op})$, and
$((\mathcal{L}(H^\beta(\mathbb{D})),\text{COT}))$.
\end{thm}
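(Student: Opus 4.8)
The plan is to apply the Eigenvalue Criterion (Proposition \ref{Grivaux}), using the rank-one operators $k_z\otimes k_w$ as the supply of eigenvectors. First I would compute the action of $C_{M^*_\varphi,M_\psi}$ on such an operator. Using the standard reproducing-kernel identities $M_\varphi^* k_z=\overline{\varphi(z)}\,k_z$ and $M_\psi^* k_w=\overline{\psi(w)}\,k_w$ (which follow from $\langle f,M_\psi^*k_w\rangle=(M_\psi f)(w)=\psi(w)\langle f,k_w\rangle$), together with $(k_z\otimes k_w)(f)=f(w)k_z$, a direct calculation gives
\begin{equation*}
C_{M^*_\varphi,M_\psi}(k_z\otimes k_w)=M_\varphi^*(k_z\otimes k_w)M_\psi=\overline{\varphi(z)}\,\psi(w)\,(k_z\otimes k_w).
\end{equation*}
Thus $k_z\otimes k_w$ is an eigenvector with eigenvalue $\overline{\varphi(z)}\psi(w)$, and this eigenvalue lies on the unit circle $S^1$ exactly when $|\varphi(z)\psi(w)|=1$, which the hypothesis makes attainable.

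Next I would feed these eigenvectors into the criterion. Fix an arbitrary countable set $D\subset S^1$. Every operator $k_z\otimes k_w$ with $\overline{\varphi(z)}\psi(w)\in S^1\setminus D$ lies in $\bigcup_{\alpha\in S^1\setminus D} Ker(C_{M^*_\varphi,M_\psi}-\alpha I)$, so it suffices to show that the span of these operators is dense. This is precisely the content of Lemma \ref{lem}, which yields density in $S_1(H^\beta(\mathbb{D}))$. Since $S_1(H^\beta(\mathbb{D}))$ is a separable complex Banach space, Proposition \ref{Grivaux} then gives frequent hypercyclicity of $C_{M^*_\varphi,M_\psi}$ on it.

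To reach the remaining spaces I would exploit the Schatten-norm inequalities $\|A\|_{op}\leq\|A\|_p\leq\|A\|_1$ for $1\leq p<\infty$. The span produced by Lemma \ref{lem} consists of finite-rank operators and is $\|\cdot\|_1$-dense; since finite-rank operators are $\|\cdot\|_p$-dense in $S_p$ and $\|\cdot\|_{op}$-dense in $\mathcal{K}$ (using that the Hilbert space $H^\beta(\mathbb{D})$ has the AP, as in Proposition \ref{P}), and since $\|\cdot\|_1$-convergence dominates both, the same span is dense in $(S_p,\|\cdot\|_p)$ and in $(\mathcal{K},\|\cdot\|_{op})$. Applying Proposition \ref{Grivaux} on each of these separable Banach spaces yields frequent hypercyclicity on $S_p(H^\beta(\mathbb{D}))$ and on $\mathcal{K}(H^\beta(\mathbb{D}))$.

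Finally, for the COT statement on $\mathcal{L}(H^\beta(\mathbb{D}))$, which is not a Banach space and hence not directly covered by the criterion, I would transfer the property from $\mathcal{K}$. Let $S_0$ be a frequently hypercyclic vector for $C_{M^*_\varphi,M_\psi}$ on $(\mathcal{K},\|\cdot\|_{op})$; note that every iterate $C_{M^*_\varphi,M_\psi}^n(S_0)$ is again compact. Given any nonempty COT-open $U\subseteq\mathcal{L}$, the set $U\cap\mathcal{K}$ is nonempty (as $\mathcal{K}$ is COT-dense in $\mathcal{L}$ by the AP) and, since COT is coarser than $\|\cdot\|_{op}$ on $\mathcal{K}$, it is $\|\cdot\|_{op}$-open; hence $\{n:C_{M^*_\varphi,M_\psi}^n(S_0)\in U\}\supseteq\{n:C_{M^*_\varphi,M_\psi}^n(S_0)\in U\cap\mathcal{K}\}$ has positive lower density, so $S_0$ is frequently hypercyclic on $(\mathcal{L},\mathrm{COT})$. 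I expect the genuinely delicate points to be the eigenvalue identity (which rests on the adjoint-on-kernels formula) and the topology comparison in this last transfer; the rest is bookkeeping with the cited results.
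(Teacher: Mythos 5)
Your proposal is correct and follows essentially the same route as the paper: the eigenvalue identity $C_{M^*_\varphi,M_\psi}(k_z\otimes k_w)=\overline{\varphi(z)}\psi(w)\,(k_z\otimes k_w)$, the density supplied by Lemma \ref{lem}, and Grivaux's Eigenvalue Criterion on $S_1(H^\beta(\mathbb{D}))$. The only (immaterial) difference is in passing to the larger spaces: the paper invokes the comparison principle once along the chain of continuous dense-range embeddings $S_1\hookrightarrow S_p\hookrightarrow\mathcal{K}\hookrightarrow(\mathcal{L},\mathrm{COT})$, whereas you re-apply the criterion on $S_p$ and $\mathcal{K}$ and spell out the transfer argument only for the COT case.
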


\begin{proof}
For $z,w\in \mathbb{D}$, consider $k_z\otimes k_w\in S_1(H)$. Since $M^*_\varphi(k_z)=\overline{\varphi(z)} k_z$, we have
\begin{center}
$C_{M^*_\varphi,M_\psi}(k_z\otimes
k_w)=\overline{\varphi(z)}\psi(w) (k_z\otimes k_w)$.
\end{center}
Thus $k_z\otimes k_w$ is an eigen vector for
$C_{M^*_\varphi,M_\psi}$ corresponding to the eigen value
$\overline{\varphi(z)}\psi(w)$. Now by Lemma \ref{lem}, span
$\{k_z\otimes k_w:\overline{\varphi(z)}\psi(w)\in S^1\setminus D\}$
is dense in $S_1(H^\beta(\mathbb{D}))$ for any countable set
$D\subset S^1$. Hence $C_{M^*_\varphi,M_\psi}$ is frequently
hypercyclic on $(S_1(H^\beta(\mathbb{D})),\|.\|_1)$ by Proposition \ref{Grivaux}. Since the embeddings
$S_1(H^\beta(\mathbb{D}))\hookrightarrow S_p(H^\beta(\mathbb{D}))
\hookrightarrow \mathcal{K}(H^\beta(\mathbb{D})) \hookrightarrow
(\mathcal{L}(H^\beta(\mathbb{D})),COT)$ are continuous and have
dense range, it follows that $C_{M^*_\varphi,M_\psi}$ is frequently
hypercyclic on each of the these spaces.
\end{proof}

As noted in the beginning of this section, the Hardy space
$H^2(\mathbb{D})$ is a special case of $H^\beta(\mathbb{D})$. In
this case, we have the following characterization for
$C_{M^*_\varphi,M_\psi}$ on $S_p(H^2(\mathbb{D}))$ and
$\mathcal{K}(H^2(\mathbb{D}))$.

\begin{thm}\label{hardy}
Let $\varphi$ and $\psi$ be non-zero, bounded and analytic on
$\mathbb{D}$, with one of them being non-constant. Then
$C_{M^*_\varphi,M_\psi}$ is frequently hypercyclic on
$(S_p(H^2(\mathbb{D})),\|.\|_p)$,
$(\mathcal{K}(H^2(\mathbb{D})),\|.\|_{op})$ and
$(\mathcal{L}(H^2(\mathbb{D})),COT)$ if $|\varphi(z)\psi(w)|=1$ for
some $z,w\in \mathbb{D}$. Conversely, if $C_{M^*_\varphi,M_\psi}$ is
frequently hypercyclic on
$(\mathcal{K}(H^2(\mathbb{D})),\|.\|_{op})$ or
$(S_p(H^2(\mathbb{D})),\|.\|_{p})$, then $|\varphi(z)\psi(w)|=1$ for
some $z,w\in \mathbb{D}$.
\end{thm}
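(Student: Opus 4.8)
The plan is to prove the two implications separately. The forward direction is immediate from what is already established: the Hardy space $H^2(\mathbb{D})$ is the case $\beta_n=1$ of $H^\beta(\mathbb{D})$, and every bounded analytic function on $\mathbb{D}$ lies in $H^\infty$, so its multiplication operator is bounded on $H^2(\mathbb{D})$. Hence, if one of $\varphi,\psi$ is non-constant and $|\varphi(z)\psi(w)|=1$ for some $z,w\in\mathbb{D}$, Theorem \ref{ghardy} applies verbatim and yields frequent hypercyclicity of $C_{M^*_\varphi,M_\psi}$ on $(S_p(H^2(\mathbb{D})),\|.\|_p)$, $(\mathcal{K}(H^2(\mathbb{D})),\|.\|_{op})$ and $(\mathcal{L}(H^2(\mathbb{D})),\text{COT})$.

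For the converse I would argue contrapositively. First I record two reductions: frequent hypercyclicity implies hypercyclicity, and the inclusion $S_p(H^2(\mathbb{D}))\hookrightarrow \mathcal{K}(H^2(\mathbb{D}))$ is continuous, has dense range, and intertwines the two restrictions of $C:=C_{M^*_\varphi,M_\psi}$, so hypercyclicity on $S_p$ forces hypercyclicity on $\mathcal{K}$. Thus it suffices to show that if $|\varphi(z)\psi(w)|\neq 1$ for all $z,w\in\mathbb{D}$, then $C$ is not hypercyclic on $(\mathcal{K}(H^2(\mathbb{D})),\|.\|_{op})$. Since $(z,w)\mapsto|\varphi(z)||\psi(w)|$ is continuous on the connected set $\mathbb{D}\times\mathbb{D}$, its range $J$ is a subinterval of $[0,\infty)$; if $1\notin J$ then either $J\subset[0,1)$ or $J\subset(1,\infty)$.

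The engine of the converse is a decay lemma: \emph{if $|\varphi(z)\psi(w)|<1$ for all $z,w$, then $C^n(S)\to 0$ in operator norm for every compact $S$}. Using $M_\varphi^n=M_{\varphi^n}$ one has $C^n(S)=M^*_{\varphi^n}SM_{\psi^n}$ and $\|C^n\|_{op}\le(\|\varphi\|_\infty\|\psi\|_\infty)^n$, so when $\|\varphi\|_\infty\|\psi\|_\infty<1$ this gives norm convergence of $C^n$ to $0$ outright. The delicate case is $\|\varphi\|_\infty\|\psi\|_\infty=1$, where the spectral radius is $1$ and the crude bound is useless; here I would rescale $\varphi_0=\varphi/\|\varphi\|_\infty$, $\psi_0=\psi/\|\psi\|_\infty$, observing that the scaling constant equals $1$ and hence $C=C_{M^*_{\varphi_0},M_{\psi_0}}$ with $\|\varphi_0\|_\infty=\|\psi_0\|_\infty=1$. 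As at least one of $\varphi_0,\psi_0$ is non-constant, the maximum modulus principle gives $|\varphi_0(z)|<1$ (resp. $|\psi_0(w)|<1$) at every interior point, so on a reproducing kernel $M^*_{\varphi_0^n}k_a=\overline{\varphi_0(a)}^n\,k_a\to 0$; since such kernels span a dense subspace and $M^*_{\varphi_0^n}$ is a contraction, $M^*_{\varphi_0^n}\to 0$ strongly. Strong convergence to $0$ against a fixed compact operator upgrades to norm convergence, and the remaining factor is a contraction, so $\|C^n(S)\|_{op}\to 0$ (when it is $\psi_0$ that is non-constant, I pass to adjoints and use $\|C^n(S)\|_{op}=\|M^*_{\psi_0^n}S^*M_{\varphi_0^n}\|_{op}$). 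This spectral-radius-one step is the main obstacle, since one must replace the norm estimate by the interplay of compactness of $S$ with the strong decay of the multiplier powers.

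It remains to conclude in both cases. If $J\subset[0,1)$, the decay lemma shows every orbit in $\mathcal{K}(H^2(\mathbb{D}))$ converges to $0$, so no orbit is dense and $C$ is not hypercyclic. If $J\subset(1,\infty)$, then $\varphi$ and $\psi$ are zero-free and bounded below, so $1/\varphi,1/\psi\in H^\infty$ and $M_\varphi,M_\psi$ are invertible; thus $C$ is invertible with $C^{-1}=C_{M^*_{1/\varphi},M_{1/\psi}}$, and $|(1/\varphi)(z)(1/\psi)(w)|<1$ everywhere. Applying the decay lemma to $C^{-1}$ gives $C^{-n}(S)\to 0$ for every compact $S$; in particular $\{C^{-n}\}$ is pointwise bounded, so by the uniform boundedness principle $M:=\sup_n\|C^{-n}\|_{op}<\infty$, whence $\|C^n(S)\|_{op}\ge\|S\|_{op}/M$ for all $n$ and all $S$. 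Hence every non-zero orbit stays bounded away from $0$ and cannot be dense, so $C$ is again not hypercyclic. In either case $1\in J$, which is the desired conclusion, completing the converse.
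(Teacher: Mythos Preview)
Your proof is correct, and the forward direction is exactly as in the paper. For the converse, however, you work much harder than necessary, and the paper's route is substantially shorter.

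Both arguments start the same way: split into the two cases $|\varphi(z)\psi(w)|<1$ for all $z,w$, or $|\varphi(z)\psi(w)|>1$ for all $z,w$. The divergence is in what you do next. You prove a decay lemma---orbits of compact operators tend to $0$---and to handle the borderline case $\|\varphi\|_\infty\|\psi\|_\infty=1$ you invoke the maximum modulus principle, strong convergence of $M^*_{\varphi_0^n}$ on reproducing kernels, the fact that strong null sequences composed with a compact operator converge in norm, and finally the uniform boundedness principle in the second case. All of this is valid, but none of it is needed. The paper simply observes that $\sup_{z,w}|\varphi(z)\psi(w)|=\|\varphi\|_\infty\|\psi\|_\infty$, so in the first case $\|C\|\le\|M^*_\varphi\|\|M_\psi\|=\|\varphi\|_\infty\|\psi\|_\infty\le 1$, and a contraction has bounded orbits, hence is not hypercyclic. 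In the second case $\inf|\varphi|\cdot\inf|\psi|\ge 1$, so $1/\varphi,1/\psi\in H^\infty$ with $\|1/\varphi\|_\infty\|1/\psi\|_\infty\le 1$, whence $\|C^{-1}\|\le 1$ and every orbit satisfies $\|C^nS\|\ge\|S\|$, again precluding density.

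The point you missed is that the ``delicate'' case $\|\varphi\|_\infty\|\psi\|_\infty=1$ is not delicate at all: to rule out hypercyclicity you do not need orbits to converge to $0$, only to stay bounded, and $\|C\|\le 1$ gives that for free. Your decay lemma does yield a genuinely stronger conclusion (norm convergence of orbits to $0$), which might be of independent interest, but for the theorem as stated the one-line norm estimate suffices.
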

\begin{proof}
We know that $M_\varphi \in
\mathcal{L}(H^2(\mathbb{D}))$ if and only if $\varphi$ is bounded on
$\mathbb{D}$. Thus by the preceding theorem, if $|\varphi(z)\psi(w)|=1$ for some $z,w \in \mathbb{D}$, then
$C_{M^*_\varphi,M_\psi}$ is frequently hypercyclic.

Let $G=\varphi(\mathbb{D})\psi(\mathbb{D})$.
Assume that the converse is not true, i.e., $G\cap S^1= \phi$. Then
$G\subseteq \mathbb{D}$ or $G\subseteq
\mathbb{C}\backslash\overline{\mathbb{D}}$ since $G$ is non-empty
and open, where $\overline{\mathbb{D}}$ is the closed unit disc in
$\mathbb{C}$. In case $G\subseteq \mathbb{D}$, then
$\|\varphi\|_{\infty}\|\psi\|_{\infty}=\sup_{z\in
\mathbb{D}}|\varphi(z)|\sup_{w\in \mathbb{D}}|\psi(w)|=\sup_{z,w \in
\mathbb{D}} |\varphi (z)\psi (w)|\leq 1$, and consequently,
\begin{center}
$\|C_{M_\varphi^*,M_\psi}\| \leq \|M^*_\varphi\|\|M_\psi\| \leq
\|\varphi\|_{\infty}\|\psi\|_{\infty} \leq 1$.
\end{center}
In the latter case when $G\subseteq
\mathbb{C}\setminus\overline{\mathbb{D}}$, we have $\displaystyle
\inf_{z,w\in \mathbb{D}}|\varphi(z)\psi(w)|\geq 1$, and so,
$\displaystyle \inf_{z\in \mathbb{D}} |\varphi(z)|>0$ and
$\displaystyle \inf_{w\in \mathbb{D}}|\psi(w)|>0$. Thus the
functions $\varphi^{-1}$ and $\psi^{-1}$ are bounded and analytic on
$\mathbb{D}$, and the corresponding operators $M_{\varphi^{-1}}$,
$M_{\psi^{-1}}$ are bounded on $H^2(\mathbb{D})$. Moreover
$(M_\varphi)^{-1}=M_{\varphi^{-1}}$ and
$(M^*_\varphi)^{-1}=M^*_{\varphi^{-1}}$. Then we observe that
\begin{center}
$\|{(C_{M_\varphi^*,M_\psi})}^{-1}\|=\|C_{M_{\varphi^{-1}}^*,M_{\psi^{-1}}}\|\leq
\|M^*_{\varphi^{-1}}\|\|M_{\psi^{-1}}\|\leq
\|\varphi^{-1}\|_{\infty}\|\psi^{-1}\|_{\infty}\leq 1$.
\end{center}
\noindent Thus, in both the cases, the operator $C_{M^*_\varphi, M_\psi}$ is not
hypercyclic on $\mathcal{K}(H^2(\mathbb{D}))$. This contradiction proves that $G\cap S^1\neq \phi$.
\end{proof}

As a special case, we state below the frequent hypercyclicity of the conjugate operator
$C_{M^*_\varphi}$ for the Hardy space $H=H^2(\mathbb{D})$.
\begin{prop}
If $\varphi$ is non-constant and $|\varphi(z)|=1$ for some $z\in
\mathbb{D}$, then the conjugate map $C_{M^*_\varphi}$ is frequently
hypercyclic on $(S_p(H),\|.\|_p)$, $(\mathcal{K}(H),\|.\|_{op})$ and
$(\mathcal{L}(H),COT)$. Conversely, if $C_{M^*_\varphi}$ is
frequently hypercyclic on $(\mathcal{K}(H),\|.\|_{op})$, then $\varphi$ is
non-constant and $\varphi(\mathbb{D})\cap S^1\neq \phi$.
\end{prop}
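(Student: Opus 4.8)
The plan is to derive this proposition as the special case $\psi=\varphi$ of Theorem \ref{hardy}. Since $(M^*_\varphi)^*=M_\varphi$, the conjugate operator is $C_{M^*_\varphi}(S)=M^*_\varphi S M_\varphi=C_{M^*_\varphi,M_\varphi}(S)$, so $C_{M^*_\varphi}$ is precisely the operator $C_{M^*_\varphi,M_\psi}$ of Theorem \ref{hardy} with $\psi=\varphi$. Throughout, $\varphi$ is a bounded analytic symbol on $\mathbb{D}$, as is required for $M_\varphi$, and hence $C_{M^*_\varphi}$, to act boundedly on these operator spaces.

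For the forward implication, I would observe that if $\varphi$ is non-constant and $|\varphi(z)|=1$ for some $z\in\mathbb{D}$, then setting $w=z$ gives $|\varphi(z)\varphi(w)|=|\varphi(z)|^2=1$. Moreover $\varphi$ is non-zero, since it takes a value of modulus $1$. Thus all the hypotheses of Theorem \ref{hardy} are satisfied with $\psi=\varphi$, and that theorem immediately yields the frequent hypercyclicity of $C_{M^*_\varphi}$ on $(S_p(H),\|.\|_p)$, $(\mathcal{K}(H),\|.\|_{op})$ and $(\mathcal{L}(H),\text{COT})$.

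For the converse, suppose $C_{M^*_\varphi}$ is frequently hypercyclic on $(\mathcal{K}(H),\|.\|_{op})$. Applying the converse part of Theorem \ref{hardy} with $\psi=\varphi$ produces points $z_0,w_0\in\mathbb{D}$ with $|\varphi(z_0)\varphi(w_0)|=1$. I would then establish non-constancy directly: if $\varphi\equiv c$ were constant, then $M_\varphi=cI$ and $C_{M^*_\varphi}(S)=|c|^2 S$ is a scalar multiple of the identity on $\mathcal{K}(H)$, which is never hypercyclic on an infinite-dimensional space, contradicting the assumption; hence $\varphi$ is non-constant. Finally, from $|\varphi(z_0)|\,|\varphi(w_0)|=1$ either both factors equal $1$, so that $\varphi(z_0)\in S^1$, or one factor exceeds $1$ while the other is smaller than $1$, in which case the continuity of $z\mapsto|\varphi(z)|$ on the connected (indeed path-connected) set $\mathbb{D}$ together with the intermediate value theorem furnishes a point $z_1\in\mathbb{D}$ with $|\varphi(z_1)|=1$. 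In either case $\varphi(\mathbb{D})\cap S^1\neq\phi$, which completes the converse.

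As the whole argument is a specialization of Theorem \ref{hardy}, I do not anticipate any serious obstacle; the only step that genuinely requires care is the conversion in the converse of the product condition $|\varphi(z_0)\varphi(w_0)|=1$ into the value condition $\varphi(\mathbb{D})\cap S^1\neq\phi$, which rests on the connectedness of $\mathbb{D}$ and the intermediate value theorem.
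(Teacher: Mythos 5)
Your proposal is correct and follows the same route as the paper, which simply derives the proposition from Theorem \ref{hardy} with $\psi=\varphi$ (the paper's proof reads ``Immediate from the preceding theorem''). You have merely filled in the details the paper leaves implicit --- the non-hypercyclicity of scalar multiples of the identity for non-constancy, and the intermediate value theorem argument converting $|\varphi(z_0)\varphi(w_0)|=1$ into $\varphi(\mathbb{D})\cap S^1\neq\emptyset$ --- and both of these are handled correctly.
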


\begin{proof}
Immediate from the preceding theorem.
\end{proof}

The above characterization is not true for multiplication operators
on all Hilbert spaces of analytic functions, e.g. consider

\begin{exa}
Let $\displaystyle \mathcal{H}=\big \{ f(z)=\sum_{n\geq 0} a_n z^n:
\|f\|^2=\sum_{n\geq 0}(n+1)^2|a_n|^2<\infty \big \}$. Then $\mathcal{H}$ is
a reproducing kernel Hilbert space and the multiplication operator $M_\varphi$ corresponding to
$\varphi(z)=z$ acts as the shift
\begin{center}
$\displaystyle M_\varphi(e_n)(z)=ze_n(z)=\frac{1} {n+1} z^{n+1}=
\frac {n+2} {n+1} e_{n+1}(z)$
\end{center}
with respect to the orthonormal basis $e_n(z)=\frac {1} {n+1} z^n$, $n\geq 0$. Now $w_1w_2...w_n=n+1$ implies
that $\sum_n \frac {1} {(w_1w_2..w_n)^2} <\infty$ and consequently,
the adjoint $M^*_\varphi$ satisfies the FHC Criterion on $\mathcal{H}$. Hence by Corollary
\ref{conjugate}, $C_{M^*_\varphi}$ is frequently hypercyclic on the
spaces $(S_p(H),\|.\|_p)$, $(\mathcal{K}(\mathcal{H}),\|.\|_{op})$
and $(\mathcal{L}(\mathcal{H}),COT)$. However there are no $z,w\in
\mathbb{D}$ such that $|\varphi(z)\varphi(w)|=1$.
\end{exa}

In the spirit of Theorem \ref{ghardy}, let us prove a similar result
about $C_{\phi(B),\psi(F)}$ defined on spaces of operators on
$\ell^p$, $1<p<\infty$, where $\varphi(B)$ and $\varphi(F)$ are
functions of the unweighted backward and forward shifts
respectively. If $\varphi(z)=\sum_{n\geq 0} a_n z^n$ is an analytic
function on some neighborhood of the closed disc
$\overline{\mathbb{D}}$, then $\varphi(B)=\sum_n a_n B^n$ and
$\varphi(F)=\sum_n a_n F^n$ are bounded operators on $\ell^p$,
$1\leq p<\infty$. Moreover, the Banach space adjoint of $\varphi(B)$
is $\phi(F)$ and that of $\varphi(F)$ is $\varphi(B)$. Note that if $f_\lambda =(1,\lambda,\lambda^2,...)$ and $|\lambda|<1$, then $\varphi(B) f_\lambda =\varphi(\lambda )f_\lambda$. In \cite{DE}, R. Delaubenfels and H. Emamirad proved that if $\varphi(\mathbb{D})\cap S^1\neq \phi$, then $\varphi(B)$ is hypercyclic on $\ell^p$. We now have the following result, which can be proved using Proposition \ref{Grivaux}.
\begin{prop}
If $\varphi$ is non-constant and $\varphi(\mathbb{D})\cap S^1 \neq
\phi$, then $\varphi(B)$ is frequently hypercyclic on $\ell^p$,
$1\leq p<\infty$.
\end{prop}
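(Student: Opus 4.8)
The plan is to verify the Eigenvalue Criterion of Proposition~\ref{Grivaux} for $T=\varphi(B)$ on the complex Banach space $\ell^p$. The eigenvectors are already available: as noted above, for each $\lambda\in\mathbb{D}$ the vector $f_\lambda=(1,\lambda,\lambda^2,\dots)$ belongs to $\ell^p$ (because $|\lambda|<1$) and satisfies $\varphi(B)f_\lambda=\varphi(\lambda)f_\lambda$, so $f_\lambda\in Ker(\varphi(B)-\varphi(\lambda)I)$. Hence it suffices to show that for every countable set $D\subset S^1$ the family $\{f_\lambda:\lambda\in\mathbb{D},\ \varphi(\lambda)\in S^1\setminus D\}$ spans a dense subspace of $\ell^p$.

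First I would produce a whole arc of admissible eigenvalues. Since $\varphi$ is non-constant, the open mapping theorem makes $\varphi(\mathbb{D})$ open, and as it meets $S^1$ it contains an open arc $\Gamma\subset S^1$. Then $W=\varphi^{-1}(\Gamma)\cap\mathbb{D}$ is open and non-empty. Next I discard the forbidden parameters: because $\varphi$ is non-constant, each fibre $\varphi^{-1}(\{d\})$ is a discrete, hence countable, subset of $\mathbb{D}$, so $\varphi^{-1}(D)$ is countable. Setting $U=W\setminus\varphi^{-1}(D)$, every $\lambda\in U$ satisfies $\varphi(\lambda)\in\Gamma\setminus D\subset S^1\setminus D$. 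Moreover, choosing a closed ball $\overline{B(\lambda_0,r)}\subset W$ with $\overline{B(\lambda_0,r)}\subset\mathbb{D}$, the set $U\cap B(\lambda_0,r)$ is an open ball with only countably many points removed, hence uncountable, and therefore has an accumulation point lying inside $\mathbb{D}$.

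It then remains to prove that $\operatorname{span}\{f_\lambda:\lambda\in U\}$ is dense in $\ell^p$, which I would do by duality and the identity theorem. Let $g=(g_n)$ be a continuous linear functional on $\ell^p$ annihilating every $f_\lambda$, $\lambda\in U$; its coefficients $(g_n)$ are bounded, so $G(\lambda)=g(f_\lambda)=\sum_{n\geq0}g_n\lambda^n$ defines an analytic function on $\mathbb{D}$ that vanishes on $U$. Since $U$ has an accumulation point in $\mathbb{D}$, the identity theorem forces $G\equiv0$, whence $g_n=0$ for all $n$ and $g=0$. Thus the span is dense, and since these $f_\lambda$ lie in $\bigcup_{\alpha\in S^1\setminus D}Ker(\varphi(B)-\alpha I)$, Proposition~\ref{Grivaux} gives that $\varphi(B)$ is frequently hypercyclic on $\ell^p$ for every $1\leq p<\infty$.

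The delicate point, and the step I expect to be the main obstacle, is guaranteeing that the accumulation point of $U$ lies \emph{strictly inside} $\mathbb{D}$, so that the identity theorem is applicable; this is exactly why I work inside a closed ball contained in the open set $W$, where deleting the countable set $\varphi^{-1}(D)$ cannot remove accumulation points.
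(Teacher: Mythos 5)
Your overall strategy is exactly the one the paper intends: it proves this proposition by feeding the eigenvectors $f_\lambda$ into the Eigenvalue Criterion (Proposition \ref{Grivaux}), and your duality/identity-theorem argument for the density of $\operatorname{span}\{f_\lambda:\lambda\in U\}$ is correct. However, there is a concrete false step in the middle. The set $W=\varphi^{-1}(\Gamma)\cap\mathbb{D}$ is \emph{not} open: $\Gamma$ is an arc of $S^1$, which has empty interior in $\mathbb{C}$, so its preimage under a continuous map need not be open (for non-constant analytic $\varphi$ it is a union of real-analytic curves). Worse, $W$ cannot contain any ball $B(\lambda_0,r)$ at all, since on such a ball one would have $|\varphi|\equiv 1$, forcing $\varphi$ to be constant by the maximum modulus principle, contrary to hypothesis. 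Hence the step ``choose a closed ball $\overline{B(\lambda_0,r)}\subset W$'' selects an object that does not exist, and this is precisely the device you single out as securing the delicate point (an accumulation point of $U$ strictly inside $\mathbb{D}$).

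The conclusion you need is nevertheless true, and the repair is short; it is the argument the paper itself uses in the analogous Lemma \ref{lem}. The set $U=\{\lambda\in\mathbb{D}:\varphi(\lambda)\in\Gamma\setminus D\}$ is uncountable, because each of the uncountably many points of $\Gamma\setminus D\subset\varphi(\mathbb{D})$ has at least one preimage in $\mathbb{D}$ and distinct points have disjoint fibres, so choosing one preimage per point injects $\Gamma\setminus D$ into $U$. Any uncountable subset of $\mathbb{D}=\bigcup_{n}\overline{B(0,1-1/n)}$ must meet some compact disc $\overline{B(0,1-1/n)}$ in an uncountable, in particular infinite, set, and therefore has an accumulation point in that compact disc, hence in $\mathbb{D}$. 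With this substitution (you may drop the fibre-countability discussion, which is not needed), your identity-theorem argument goes through verbatim for $1\le p<\infty$ and the proposition follows from Proposition \ref{Grivaux}.
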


Let $\mathcal{N}(\ell^p)$ denote the space of all nuclear operators on $\ell^p$. Then the trace $tr(T)=\sum_n
x_n^*(x_n)$ of $T=\sum_n x_n\otimes x_n^*\in \mathcal{N}(\ell^p)$,
$1<p<\infty$. Then the dual of $\mathcal{N}(\ell^p)$ is identified
with $\mathcal{L}(\ell^p)$ via the trace-duality $(S,T)=tr(TS)$, where $T\in \mathcal{N}(\ell^p)$ and $S\in
\mathcal{L}(\ell^p)$, cf. \cite{BST}, Theorem 16.50. The one-rank operator $f_\lambda \otimes f_\mu$ on $\ell^p$ is given
by $x\rightarrow f_\mu(x) f_\lambda$.

\begin{lem}\label{lemma-nuclear}
Let $\varphi$ and $\psi$ be non-zero functions analytic on some
neighborhoods of the closed disc $\overline{\mathbb{D}}$ with one of
them being non-constant and $|\varphi(\lambda)\psi(\mu)|=1$ for some
$\lambda,\mu \in \mathbb{D}$. Then
\begin{equation*}
\emph{span} \{f_\lambda \otimes f_\mu: \varphi(\lambda)\psi(\mu)\in
S^1\setminus D\}
\end{equation*}
is dense in $\mathcal{N}(\ell^p)$ for every countable set $D\subset
S^1$ and $1<p<\infty$.
\end{lem}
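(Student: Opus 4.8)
The plan is to mirror the proof of Lemma \ref{lem}, replacing the Schatten class $S_1$ by $\mathcal{N}(\ell^p)$, the reproducing kernels $k_z$ by the eigenvectors $f_\lambda=(1,\lambda,\lambda^2,\dots)$, and the trace-duality $S_1(\mathcal{H})^*=\mathcal{L}(\mathcal{H})$ by the identification $\mathcal{N}(\ell^p)^*=\mathcal{L}(\ell^p)$ via $(S,T)=tr(TS)$. First I would isolate the relevant parameter set. By the open mapping theorem for analytic functions the set $\varphi(\mathbb{D})\psi(\mathbb{D})=\{\varphi(\lambda)\psi(\mu):\lambda,\mu\in\mathbb{D}\}$ is non-empty and open, and since $|\varphi(\lambda)\psi(\mu)|=1$ for some $\lambda,\mu\in\mathbb{D}$, it contains an open arc $\Gamma\subset S^1$, which I take to be maximal.

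Next, exactly as in Lemma \ref{lem}, I would establish the combinatorial claim: writing $U\times V=\{(\lambda,\mu)\in\mathbb{D}\times\mathbb{D}:\varphi(\lambda)\psi(\mu)\in\Gamma\setminus D\}$, the set $U$ is uncountable, and for each $\lambda\in U$ there is an uncountable $V_1=V_1(\lambda)\subseteq V$ with $\varphi(\lambda)\psi(\mu)\in\Gamma\setminus D$ for all $\mu\in V_1$. The same three-case analysis (both $\varphi,\psi$ non-constant; $\psi$ constant; $\varphi$ constant) carries over verbatim, the point being that each of $\psi(\lambda_0)\varphi(\mathbb{D})$, $\varphi(\lambda_0)\psi(\mathbb{D})$ is open while $D$ is countable, so removing $D$ leaves uncountably many parameters. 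Since $\Gamma\setminus D\subseteq S^1\setminus D$, it suffices to prove that $\Lambda=\operatorname{span}\{f_\lambda\otimes f_\mu:(\lambda,\mu)\in U\times V\}$ is dense in $\mathcal{N}(\ell^p)$.

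The density is then proved by duality: it is enough to show that any $A\in\mathcal{L}(\ell^p)=\mathcal{N}(\ell^p)^*$ annihilating $\Lambda$ vanishes, after which the Hahn--Banach theorem gives density. A direct computation with $(f_\lambda\otimes f_\mu)A=f_\lambda\otimes(A^*f_\mu)$ gives $tr\big((f_\lambda\otimes f_\mu)A\big)=f_\mu(Af_\lambda)=\sum_{j,k}a_{jk}\lambda^k\mu^j$, where $a_{jk}=(Ae_k)_j$ are the matrix entries of $A$ (the double series converges since $|\lambda|,|\mu|<1$ ensure $f_\lambda\in\ell^p$, $f_\mu\in\ell^{p'}$ and the coefficients are bounded by $\|A\|$). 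Now fix $\lambda\in U$: the map $\mu\mapsto f_\mu(Af_\lambda)=\sum_j (Af_\lambda)_j\,\mu^j$ is analytic on $\mathbb{D}$ and vanishes on the uncountable set $V_1(\lambda)$, which has a limit point in $\mathbb{D}$, so by the identity theorem all its coefficients vanish, i.e. $Af_\lambda=0$. Then for each fixed $j$ the scalar analytic function $\lambda\mapsto (Af_\lambda)_j=\sum_k a_{jk}\lambda^k$ vanishes on the uncountable set $U$, hence $a_{jk}=0$ for all $k$; letting $j$ vary gives $A=0$. Since $\operatorname{span}\{e_k\}$ is dense in $\ell^p$ this is consistent, and $\Lambda$ is dense in $\mathcal{N}(\ell^p)$.

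The eigenvalue computation $C_{\varphi(B),\psi(F)}(f_\lambda\otimes f_\mu)=\varphi(\lambda)\psi(\mu)(f_\lambda\otimes f_\mu)$ (used in the theorem that follows) and the analyticity verifications are routine. The conceptual crux is the topological claim of the second step guaranteeing that uncountably many eigenvalues land in $S^1\setminus D$; however, since this is identical to the corresponding step of Lemma \ref{lem}, the genuinely new work is the correct trace-duality set-up that turns the annihilation condition into the vanishing of the two-variable power series $\sum_{j,k}a_{jk}\lambda^k\mu^j$, after which the successive applications of the identity theorem close the argument.
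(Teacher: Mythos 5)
Your proposal is correct and follows essentially the same route as the paper: the arc $\Gamma\subset S^1\cap\varphi(\mathbb{D})\psi(\mathbb{D})$, the uncountable parameter sets, and the trace-duality $\mathcal{N}(\ell^p)^*=\mathcal{L}(\ell^p)$ reducing density to showing that an annihilating $A$ vanishes. The only cosmetic difference is that you expand $f_\mu(Af_\lambda)$ as an explicit double power series and apply the identity theorem coefficientwise, whereas the paper phrases the same step as the density of $\operatorname{span}\{f_\lambda:\lambda\in U_1\}$ in $\ell^p$ (and of $\{f_\mu:\mu\in V\}$ in $\ell^{p'}$), fixing $\mu$ first rather than $\lambda$.
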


\begin{proof}
Invoking the proof of Lemma \ref{lem}, we can find an arc $\Gamma
\subseteq S^1\cap \varphi(\mathbb{D})\psi(\mathbb{D})$. Write
$U\times V=\{(\lambda,\mu)\in \mathbb{D}\times \mathbb{D}:
\varphi(\lambda)\psi(\mu)\in \Gamma\setminus D\}$. Then, for each
$\mu \in V$, there exists an uncountable set $U_1\subset \mathbb{D}$ such that
for $\lambda\in U_1$ we have $\varphi(\lambda)\psi(\mu) \in \Gamma
\setminus D$. 

Let us now prove that $\Delta=$ span $\{f_\lambda \otimes f_\mu:
\lambda \in U, \mu \in V \}$ is dense in $\mathcal{N}(\ell^p)$. For this, let $S \in \mathcal{L}(\ell^p)$ such that $tr(TS)=0$ for all $T\in \Delta$. In particular, if $T=f_\lambda \otimes f_\mu$ for $\lambda \in U_1$, then $tr(f_\lambda \otimes S^*f_\mu)= (S^*f_\mu)(f_\lambda)=0$. Since $U_1$ has limit points in $\mathbb{D}$, we have that span$\{f_\lambda: \lambda \in U_1\}$ is dense in $\ell^p$. Thus $S^*(f_\mu)=0$ for all $\mu
\in V$. As span$\{f_\lambda: \mu \in V \}$ is dense in $\ell^{p^*}$,
the dual of $\ell^p$, $S=0$.
\end{proof}

Using the above lemma, we prove the frequent hypercyclicity of
$C_{\varphi(B),\psi(F)}$ as follows.

\begin{thm}\label{prop-lp}
Suppose $\phi$ and $\psi$ are non-zero analytic maps on some
neighborhoods of the closed disc $\overline{\mathbb{D}}$ such that
$|\phi(z)\psi(w)|=1$ for some $z,w\in \mathbb{D}$, with one of
$\varphi$ and $\psi$ being non-constant. Then $C_{\phi(B),\psi(F)}$
is frequently hypercyclic on $(\mathcal{N}(\ell^p),\|.\|_{nu})$,
$(\mathcal{K}(\ell^p),\|.\|_{op})$ and $(\mathcal{L}(\ell^p),COT)$
for $1<p<\infty$.
\end{thm}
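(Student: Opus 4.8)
The plan is to follow the template of Theorem \ref{ghardy}, replacing the reproducing-kernel eigenvectors $k_z\otimes k_w$ by the rank-one operators $f_\lambda\otimes f_\mu$ and invoking Lemma \ref{lemma-nuclear} in place of Lemma \ref{lem}. The essential first step is to exhibit $f_\lambda\otimes f_\mu$ as an eigenvector of $C_{\varphi(B),\psi(F)}$. Since $Bf_\lambda=\lambda f_\lambda$ for $|\lambda|<1$, we have $\varphi(B)f_\lambda=\varphi(\lambda)f_\lambda$; dually, because the Banach-space transpose of $\psi(F)$ is $\psi(B)$ and $f_\mu\in\ell^{p'}$ satisfies $Bf_\mu=\mu f_\mu$, the functional $f_\mu\circ\psi(F)$ equals $\psi(\mu)\,f_\mu$. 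Writing $C_{\varphi(B),\psi(F)}(S)=\varphi(B)\,S\,\psi(F)$ and feeding in $S=f_\lambda\otimes f_\mu$, these two relations collapse to
\begin{equation*}
C_{\varphi(B),\psi(F)}(f_\lambda\otimes f_\mu)=\varphi(\lambda)\psi(\mu)\,(f_\lambda\otimes f_\mu),
\end{equation*}
so the eigenvalue attached to $f_\lambda\otimes f_\mu$ is $\varphi(\lambda)\psi(\mu)$.

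With the eigenvectors identified, the second step is to check the hypothesis of the Eigenvalue Criterion (Proposition \ref{Grivaux}). Fix an arbitrary countable set $D\subset S^1$. The eigenvectors whose eigenvalues lie in $S^1\setminus D$ are exactly the $f_\lambda\otimes f_\mu$ with $\varphi(\lambda)\psi(\mu)\in S^1\setminus D$, and Lemma \ref{lemma-nuclear} states precisely that their span is dense in $\mathcal{N}(\ell^p)$. Hence $\bigcup_{\alpha\in S^1\setminus D}Ker\big(C_{\varphi(B),\psi(F)}-\alpha I\big)$ spans a dense subspace of $\mathcal{N}(\ell^p)$, and Proposition \ref{Grivaux} gives frequent hypercyclicity of $C_{\varphi(B),\psi(F)}$ on $(\mathcal{N}(\ell^p),\|.\|_{nu})$.

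Finally, I would transfer frequent hypercyclicity to the two coarser spaces by the continuity-and-dense-range argument used at the close of Theorem \ref{ghardy}. The inclusions $\mathcal{N}(\ell^p)\hookrightarrow\mathcal{K}(\ell^p)\hookrightarrow(\mathcal{L}(\ell^p),COT)$ are continuous (the nuclear norm dominates the operator norm, which in turn controls COT-convergence) and have dense range (finite-rank operators are norm-dense in $\mathcal{K}(\ell^p)$ since $\ell^p$ has the AP, and are COT-dense in $\mathcal{L}(\ell^p)$). Each inclusion intertwines $C_{\varphi(B),\psi(F)}$ with itself, so the image of a frequently hypercyclic vector on $\mathcal{N}(\ell^p)$ is frequently hypercyclic on each larger space, yielding the remaining two assertions.

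I do not expect a serious obstacle, as the delicate density argument has already been absorbed into Lemma \ref{lemma-nuclear}. The one point demanding care is the dual computation $f_\mu\circ\psi(F)=\psi(\mu)f_\mu$: this rests on the transpose of $\psi(F)$ being $\psi(B)$ (recorded just before Lemma \ref{lemma-nuclear}) together with $Bf_\mu=\mu f_\mu$ in $\ell^{p'}$, and one must note that the trace-duality pairing is bilinear rather than sesquilinear, so that the eigenvalue is genuinely $\varphi(\lambda)\psi(\mu)$ with no complex conjugate intruding (in contrast to the Hilbert-space Hardy case of Theorem \ref{ghardy}, where the sesquilinear inner product produced $\overline{\varphi(z)}\psi(w)$).
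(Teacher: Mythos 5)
Your proposal is correct and follows essentially the same route as the paper: identify $f_\lambda\otimes f_\mu$ as an eigenvector of $C_{\varphi(B),\psi(F)}$ with eigenvalue $\varphi(\lambda)\psi(\mu)$, invoke Lemma \ref{lemma-nuclear} for density of the relevant eigenvectors, apply Proposition \ref{Grivaux}, and pass to $\mathcal{K}(\ell^p)$ and $(\mathcal{L}(\ell^p),\mathrm{COT})$ via the continuous dense-range embeddings as in Theorem \ref{ghardy}. Your remark on the bilinear (non-sesquilinear) pairing explaining the absence of a conjugate on $\varphi(\lambda)$ is a correct and worthwhile clarification of a point the paper leaves implicit.
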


\begin{proof}
Let $\lambda, \mu \in \mathbb{D}$. Since
$\phi(B)(f_\lambda)=\phi(\lambda)f_\lambda$, we get
\begin{center}
$C_{\phi(B),\psi(F)}(f_\lambda \otimes
f_\mu)=\varphi(B)(f_\lambda)\otimes \psi(B)(f_\mu)=\phi(\lambda)
\psi(\mu) (f_\lambda \otimes f_\mu)$,
\end{center}
where $f_\lambda=(1,\lambda,\lambda^2,...)$. From Lemma
\ref{lemma-nuclear}, it follows that span $\{f_\lambda \otimes f_\mu:
\varphi(\lambda)\psi(\mu)\in S^1\setminus D\}$ is dense in $\mathcal{N}(\ell^p)$ and the proof is complete by Proposition
\ref{Grivaux}.
\end{proof}

\normalsize
\baselineskip=17pt

\subsection*{Acknowledgements}
The second author acknowledges the Council of Scientific and
Industrial Research INDIA for a research fellowship.

\end{document}